\definecolor{darkred}{rgb}{0.5,0,0}
\definecolor{darkgreen}{rgb}{0,0.5,0}
\definecolor{darkblue}{rgb}{0,0,0.5}
\newcommand\redout{\bgroup\markoverwith
{\textcolor{red}{\rule[.5ex]{2pt}{0.4pt}}}\ULon}
\theoremstyle{plain}
\newtheorem{theorem}{Theorem}[section]
\newtheorem{lemma}[theorem]{Lemma}
\newtheorem{definition}[theorem]{Definition}
\theoremstyle{remark}
\newtheorem{remark}[theorem]{Remark}
\newcommand\M{\mathcal{M}}
\renewcommand\M{\mathcal{M}}
\newcommand{\XX}{\mathcal{X}}
\newcommand{\R}{\mathbb{R}}
\newcommand{\C}{\mathbb{C}}
\newcommand{\cC}{\mathcal{C}}
\newcommand{\Z}{\mathbb{Z}}
\newcommand{\Q}{\mathbb{Q}}
\renewcommand{\P}{\mathbb{P}}
\newcommand{\bA}{\mathbb{A}}
\newcommand\lie[1]{\mathfrak{#1}}
\newcommand{\g}{\lie{g}}
\newcommand{\q}{\lie{q}}
\renewcommand{\l}{\lie{l}}
\newcommand{\z}{\lie{z}}
\renewcommand{\t}{\lie{t}}
\renewcommand{\u}{\lie{u}}
\newcommand{\on}{\operatorname}
\newcommand{\st}{\on{st}}
\newcommand{\quot}{\on{quot}}
\newcommand{\dual}{\vee}
\newcommand{\Edge}{\on{Edge}}
\newcommand{\Ver}{\on{Vert}}
\newcommand{\Proj}{\on{Proj}}
\newcommand{\Aut}{ \on{Aut} }
\newcommand{\Ad}{ \on{Ad} }
\newcommand{\Hom}{ \on{Hom}}
\newcommand{\Spec}{\on{Spec}}
\newcommand{\ssm}{-}
\newcommand\dirac{/\kern-1.2ex\partial} 
\newcommand\qu{/\kern-.7ex/} 
\newcommand\lqu{\backslash \kern-.7ex \backslash} 
\newcommand\dr{r_+ \kern-.7ex - \kern-.7ex r_-}
\newcommand{\lev}{{\on{lev}}} 
\newtheorem{example}[theorem]{Example}
\newcommand{\labell}\label
\newcommand{\ol}{\overline}
\newcommand{\lan}{\langle}
\newcommand{\ran}{\rangle}
\newcommand{\ti}{\tilde}
\newcommand\Def{\on{Def}}
\newcommand\age{\on{age}}
\renewcommand{\ss}{\on{ss}}
\newcommand{\us}{\on{us}}
\newcommand\MM{\mathfrak{M}}
\newcommand\Gr{\on{Gr}}
\newcommand\rank{\on{rank}}
\newcommand\ev{\on{ev}}
\newcommand\ul{\underline}
\newcommand\mO{\mathcal{O}}
\newcommand\bra[1]{ < \kern-.7ex {#1} \kern-.7ex >} 
\newcommand\bdefn{\begin{definition}}
\newcommand\edefn{\end{definition}}
\newcommand\bea{\begin{eqnarray*}}
\newcommand\eea{\end{eqnarray*}}
\newcommand\bcv{\left[ \begin{array}{r} }
\newcommand\ecv{\end{array} \right] }
\newcommand\bma{\left[ \begin{array} }
\newcommand\ema{\end{array} \right]}
\newcommand\ben{\begin{enumerate}}
\newcommand\een{\end{enumerate}}
\newcommand\beq{\begin{equation}}
\newcommand\eeq{\end{equation}}
\newcommand\bex{\begin{example}}
\newcommand\bsj{\left\{ \begin{array}{rrr} }
\newcommand\esj{\end{array} \right\}}
\newcommand\cI{\mathcal{I}}
\newcommand\eex{\end{example}}
\newcommand\sx{*\kern-.5ex_X}
\newcommand{\fr}{{\on{fr}}}
\newcommand{\cT}{{\mathcal{T}}}
\def\mathunderaccent#1{\let\theaccent#1\mathpalette\putaccentunder}
\def\putaccentunder#1#2{\oalign{$#1#2$\crcr\hidewidth \vbox
to.2ex{\hbox{$#1\theaccent{}$}\vss}\hidewidth}}
\author{Eduardo Gonz\'alez} 
\thanks{Partially supported by 
DMS-1510518}
\address{
Department of Mathematics,
University of Massachusetts Boston,
100 William T. Morrissey Boulevard,
Boston, MA 02125, U.S.A.}
\email{eduardo@math.umb.edu}
\author{Pablo Solis}
\address{Department of Mathematics,
California Institute of Technology, 1200 East California
Boulevard, Pasadena, CA 91125, U.S.A.}
\email{pablos@caltech.edu}
\author{Chris T. Woodward}
\thanks{Partially supported by 
DMS-1207194}
\address{Mathematics-Hill Center,
Rutgers University, 110 Frelinghuysen Road, Piscataway, NJ 08854-8019,
U.S.A.}  \email{ctw@math.rutgers.edu}
\begin{document}

\title{Stable gauged maps}

\maketitle

\begin{abstract}  
  We give an introduction to moduli stacks of gauged maps satisfying a
  stability condition introduced by Mundet \cite{mund:corr} and
  Schmitt \cite{schmitt:univ}, and the associated integrals giving
  rise to gauged Gromov-Witten invariants.  We survey various
  applications to cohomological and K-theoretic Gromov-Witten
  invariants.
\end{abstract}

\tableofcontents

\section{Introduction}

The moduli stack of maps from a curve to the stack quotient of a
smooth projective variety by the action of a complex reductive group
has a natural stability condition introduced by Mundet in
\cite{mund:corr} and investigated further in Schmitt
\cite{schmitt:univ,schmitt:git}; the condition generalizes stability
for bundles over a curve introduced by Mumford, Narasimhan-Seshadri
and Ramanathan \cite{ra:th}.  Let $X$ be a smooth linearized
projective $G$-variety such that the semi-stable locus is equal to the
stable locus, and let denote $X/G$ the quotient stack.  By definition
a map from a curve $C$ to $X/G$ is a pair that consists of a bundle
$P \to C$ and a section $u$ of the associated bundle
$P \times_G X \to C$.  We denote by $\pi: X/G \to \on{pt}/G =: BG$ the
projection to the classifying space.  In case $X$ is a point, a
stability condition for $\Hom(C,X/G)$, bundles on $C$, was introduced
by Ramanathan \cite{ra:th}.  For $X$ not a point,
a stability condition that combines bundle and target stability was
introduced by Mundet \cite{mund:corr}.  There is a compactified moduli
stack $\ol{\M}^G_n(C,X,d)$ whose open locus consists of Mundet
semistable maps of class $d \in H_2^G(X,\Z)$ with markings
$$ v: C \to X/G, \quad (z_1,\ldots, z_n) \in C^n \
\text{distinct},$$
and where the notion of semi-stability depends on a choice of
linearization \(\ti{X}\to X\).
The compactification uses the notion of Kontsevich stability for
maps \cite{qk1}, \cite{qk2}, \cite{qk3}.  The stack admits evaluation
maps to the quotient stack
$$ \ev: \ol{\M}^G_n(C,X,d) \to (X/G)^n,\quad
(\hat{C},P,u,\ul{z})\mapsto (z_i^* P, u \circ z_i) .$$
In addition, assuming stable=semistable there is a virtual fundamental
class constructed via the machinery of Behrend-Fantechi \cite{bf:in}.
Let $\widehat{QH}_G(X)$ denote the formal completion of $QH_G(X)$ at
$0$.  The {\em gauged Gromov-Witten trace} is the map
$$ \tau_X^G : \widehat{QH}_G(X) \to \Lambda_X^G, \quad \alpha \mapsto
\sum_{n,d} \frac{q^d}{n!}  \int_{\ol{\M}_n^G(C,X,d)} \ev^*
(\alpha,\ldots,\alpha) .$$
Here \(\Lambda_X^G\) is the (equivariant) field of Novikov
variables (see Definition
\ref{d:oqh}.)
The derivatives of the potential will be called \emph{gauged Gromov-Witten
invariants}.  For toric varieties, the potential $\tau_X^G$ already
appears in Givental \cite{gi:eq} and Lian-Liu-Yau \cite{lly:mp1} under
the name of {\em quasimap potential}.\footnote{We are simplifying
  things a bit for the sake of exposition; actually the quasimap
  potentials in those papers involve an additional determinant line
  bundle in the integrals.}  In those papers (following earlier work
of Morrison-Plesser \cite{mp:si}) the gauged potential is explicitly
computed in the toric case, and questions about Gromov-Witten
invariants of toric varieties or complete intersections therein
reduced to a computation of quasimap invariants.  So what we are
concerned with here is the generalization of that story to arbitrary
git quotients, especially the non-abelian and non-Fano cases to cover
situations such as arbitrary toric stacks and quiver moduli.
Especially, we are interested in re-proving and extending the results
of those papers in a uniform and geometric way that extends to quantum
K-theory and non-abelian quotients and does not use any assumption
such as the existence of a torus action with isolated fixed points.
The splitting axiom for the gauged invariants is somewhat different
than the usual splitting axiom in Gromov-Witten theory: the potential
$\tau_X^G$ is a non-commutative version of a {\em trace} on the
Frobenius manifold $QH_G(X)$.  Note that there are several other
notions of gauged Gromov-Witten invariants, for example,
Ciocan-Fontanine-Kim-Maulik \cite{cf:st}, Frenkel-Teleman-Tolland
\cite{toll:gw1}, as well as a growing body of work on gauged
Gromov-Witten theory with potential \cite{tianxu}, \cite{glsm}.

The gauged Gromov-Witten invariants so defined are closely related to,
but different from in general, the graph Gromov-Witten invariants of
the stack-theoretic geometric invariant theory quotient.  The stack of
marked maps to the git quotient
$$ v: C \to X \qu G, \quad (z_1,\ldots, z_n) \in C^n \ \text{distinct} $$
is compactified by the {\em graph space} \label{graphs}
$$ \ol{\M}_n(C, X \qu G, d) := \ol{\M}_{g,n}(C \times X \qu G,
(1,d))$$
the moduli stack of stable maps to $C \times X \qu G$ of class
$(1,d)$; in case $X \qu G$ is an orbifold the domain is allowed to
have orbifold structures at the nodes and markings as in
\cite{cr:orb}, \cite{agv:gw}.  The stack admits evaluation maps
$$ \ev: \ol{\M}_n(C,X \qu G,d) \to (\ol{\cI}_{X \qu G})^n $$
where $\ol{\cI}_{X\qu G}$ is the {\em rigidified inertia stack} of
$X\qu G$.  
The {\em graph trace} is the map
$$ \tau_{X \qu G}: \widehat{QH}_{\C^\times}(X \qu G) \to \Lambda_X^G, \quad \alpha \mapsto
\sum_{n,d} \frac{q^d}{n!}  \int_{\ol{\M}_n(C,X \qu G,d)} \ev^*
(\alpha,\ldots,\alpha) $$
where the equivariant parameters are interpreted as Chern classes of
the cotangent lines at the markings.  The relationship between the
graph Gromov-Witten invariants of $X \qu G$ and Gromov-Witten
invariants arising from stable maps to $X \qu G$ in the toric case is
studied in \cite{gi:eq}, \cite{lly:mp1}, and other papers.

The goal of this paper is to describe, from the point of view of
algebraic geometry, a cobordism between the moduli stack of Mundet
semistable maps and the moduli stack of stable maps to the git
quotient with corrections coming from ``affine gauged maps''.  Affine
gauged maps are data
$$ v: \P^1 \to X /G, \quad v(\infty) \in X^{\ss}/G, \quad
z_1,\ldots,z_n \in \P^1 - \{ \infty \}  \ \text{distinct}$$
where $\infty = [0,1] \in \P^1$ is the point ``at infinity'', modulo
{\em affine} automorphisms, that is, automorphisms of $\P^1$ which
preserve the standard affine structure on $\P^1 - \{ 0 \}$.  Denote by
$\ol{\M}^G_{n,1}(\bA,X)$ the compactified moduli stack of such affine
gauged maps to $X$; we use the notation $\bA$ to emphasize that the
equivalence only uses affine automorphisms of the domains.  
 \label{scaledaffine} Evaluation at the markings defines a morphism
$$ \ev \times \ev_\infty: \ol{\M}^G_{n,1}(\bA,X,d) \to (X/G)^n \times
\ol{\cI}_{X \qu G} .$$
In the case $d = 0$, the moduli stack $\ol{\M}^G_{0,1}(\bA,X,d)$ is
isomorphic to $\ol{\cI}_{X \qu G}$ via evaluation at infinity. 
The {\em quantum Kirwan map} is the map
$$ \kappa_X^G: \widehat{QH}_G(X) \to QH(X \qu G) $$
defined as follows.  Let $\ev_{\infty,d}: \ol{\M}_{n,1}^G(\bA,X,d) \to
\ol{\cI}_{X \qu G}$ be evaluation at infinity restricted to affine
gauged maps 
of class $d$, 
and
$$ \ev_{\infty,d,*}: H( \ol{\M}_{n,1}^G(\bA,X,d)) \otimes_\Q \Lambda_X^G \to
H_G(\ol{\cI}_{X \qu G}) \otimes_\Q \Lambda_X^G $$
push-forward using the virtual fundamental class.  The quantum Kirwan
map is
$$ \kappa_X^G: \widehat{QH}_G(X) \to QH(X \qu G), \quad \alpha \mapsto
\sum_{n,d} \frac{q^d}{n!} \ev_{\infty,d,*} \ev^* (\alpha,\ldots,
\alpha) .$$
As a formal maps, each term in the Taylor series of $\kappa_X^G$ and
$\tau_X^G$ is well-defined on $QH_G(X)$, but in general the sums of
the terms may have convergence issues.  The $q =0 $ specialization of
$\kappa_X^G$ is the Kirwan map to the cohomology of a git quotient
studied in \cite{ki:coh}.

The cobordism relating stable maps to the quotient with Mundet
semistable maps is itself a moduli stack of gauged maps with
extra structure, a scaling, defined by allowing the linearization to tend
towards infinity, that is, by considering Mundet
semistability with respect to the linearization   
\(\ti{X}^k\) as \(k\) goes to infinity.  In
order to determine which stability condition to use, the source curves
must be equipped with additional data of a {\em scaling}: a section
$$ \delta:
\hat{C} \to \P \left(\omega_{\hat{C}/(C \times S)} \oplus
  \mO_{\hat{C}} \right) $$
of the projectivized relative dualizing sheaf.  If the section is
finite, one uses the Mundet semistability condition, while if infinite
one uses the stability condition on the target.  The possibility of
constructing a cobordism in this way was suggested by a symplectic
argument of Gaio-Salamon \cite{ga:gw}.  A {\em scaled gauged map} is a
map to the quotient stack whose domain is a curve equipped with a
section of the projectivized dualizing sheaf and a collection of
distinct markings: A datum
$$ \hat{C} \to S, \quad v:\hat{C} \to C \times X/G, \quad 
\delta: \hat{C} \to \P \left(\omega_{\hat{C}/(C \times S)} \oplus
  \mO_{\hat{C}} \right), \quad z_1,\ldots, z_n \in \hat{C} $$
where 
\begin{itemize}
  \item[--] $\hat{C} \to S$ is a nodal curve of genus $g=\on{genus}{C}$,
  \item[--] $v = (P,u)$ is a morphism to the quotient stack $X/G$ that
    consists of a principal $G$-bundle $P \to \hat{C}$ and a map
    $u: \hat{C} \to P \times_G X$ whose class projects to
    $[C] \in H_2(C)$, so that a sub-curve of $\hat{C}$ is isomorphic
    to $C$ and all other irreducible components map to points in $C$;
    and
\item[--] $\delta$ is a section of the projectivization of the relative
  dualizing sheaf $\omega_{\hat{C}/(C \times S)}$ satisfying certain
  properties.
\end{itemize} 
In the case that $X \qu G$ is an orbifold, the domain $\hat{C}$ is
allowed to have orbifold singularities at the nodes and markings and
the morphism is required to be representable.  In particular, in the
case $X,G$ are points and $n = 0$, the stability condition requires
$\hat{C} \cong C$ and the moduli space $\ol{\M}_{0,1} \cong \P^1$ is
the projectivized space of sections $\delta$ of $\omega_{\hat{C}/C
  \times S} \cong \mO_C$.  The moduli stack of stable scaled gauged
maps $\ol{\M}^G_{n,1}(C,X,d)$ \label{scaledproj} with $n$ markings and
class $d \in H_2^G(X,\Q)$ is equipped with a forgetful map
$$ \rho: \ol{\M}^G_{n,1}(C,X,d) \to \ol{\M}_{0,1} \cong \P^1, \quad
[\hat{C}, u, \delta, \ul{z}] \mapsto \delta .$$
The fibers of $\rho$ over zero $0,\infty \in \P^1$ consist of either
Mundet semistable gauged maps, in the case $\delta = 0$, or stable
maps to the git quotient together with affine gauged maps, in the case
$\delta = \infty$: In notation,
\begin{multline} \label{fibers}
\rho^{-1}(0) = \ol{\M}^G_n(C,X,d), \quad \rho^{-1}(\infty) =
\bigcup_{d_0 + \ldots + d_r = d} \bigcup_{I_1 \cup \ldots \cup I_r
  =\{1,\ldots,n\} } \\ ( \ol{\M}_{g,r}^{\fr}(C \times X \qu G,(1,d_0))
\times_{(\ol{\cI}_{X \qu G})^r} \prod_{j=1}^r \ol{\M}^G_{|I_j|,1}
(\bA,X,d_j))/(\C^\times)^r \end{multline}
where we identify $H_2(X \qu G)$ as a subspace of $H_2^G(X)$ via the
inclusion $X \qu G \subset X / G$, and
$\ol{\M}_{g,r}^{\fr}(C \times X \qu G,(1,d_0))$ denotes the moduli
space of stable maps with framings of the tangent spaces at the
markings.  The properness of these moduli stacks was argued via
symplectic geometry in \cite{qk2}.  We give an algebraic proof in
\cite{reduc}.

The cobordism of the previous paragraph gives rise to a relationship
between the gauged invariants and the invariants of the quotient that
we call the quantum Witten formula.  The formula expresses the failure
of a diagram
%
%
\begin{equation} 
  \label{witten}
\begin{tikzcd}[every arrow/.append style={-latex}]
  \widehat{QH}_G(X) \arrow{dr}[swap]{\tau_X^G}
  \arrow{rr}{{\kappa}_X^G} & &\widehat{QH}( X \qu G)
  \arrow{dl}{\tau_{X \qu G}} \\ 
  & \Lambda_X^G &
   \end{tikzcd}
\end{equation}
to commute as an explicit sum of contributions from wall-crossing
terms.  Here $\widehat{QH}_G(X), \widehat{QH}(X \qu G)$ denote formal
completions of the quantum cohomologies, $\Lambda_X^G$ is the
equivariant Novikov ring and the diagonal arrows are the potentials
that arise from virtual integration over the certain moduli stacks of
gauged maps.  The wall-crossing terms vanish in the limit of large
linearization and are not discussed in this paper.  We explain how
this gives rise to the diagram \eqref{witten}, at least in the large
linearization limit.

\begin{theorem} {\rm  (Adiabatic limit theorem, \cite{qk3})}
The diagram \eqref{witten} commutes in the limit of large
linearization $\ti{X}^k, k \to \infty$, that is, 
$$ \lim_{k \to \infty} \tau_X^G = \tau_{X \qu G} \circ \kappa_X^G
.$$
\end{theorem}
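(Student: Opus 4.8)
The plan is to deduce the identity from deformation invariance of virtual integrals applied to the master space $\ol{\M}^G_{n,1}(C,X,d)$ together with its forgetful morphism $\rho$ to $\ol{\M}_{0,1}\cong\P^1$. First I would record that $\ol{\M}^G_{n,1}(C,X,d)$ is proper (by the properness cited from \cite{qk2}, \cite{reduc}) and carries a perfect obstruction theory relative to $\P^1$, so that a virtual class $[\ol{\M}^G_{n,1}(C,X,d)]^{\vir}$ exists and restricts, via the refined Gysin maps, to the virtual classes of the fibers of $\rho$. Since $0$ and $\infty$ are rationally equivalent $0$-cycles on $\P^1$, the refined Gysin pullbacks agree, $\rho^![0]=\rho^![\infty]$, whence
$$[\rho^{-1}(0)]^{\vir} = [\rho^{-1}(\infty)]^{\vir}$$
as classes pushed forward to $\ol{\M}^G_{n,1}(C,X,d)$. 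Integrating $\ev^*(\alpha,\ldots,\alpha)$ against both sides produces, for each $(n,d)$, an equality of numbers which, summed with the weights $q^d/n!$, will be assembled into the claimed equality of potentials.

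Next I would match the two sides of that integral identity with the two compositions in \eqref{witten}. By \eqref{fibers} the fiber over $0$ is $\rho^{-1}(0)=\ol{\M}^G_n(C,X,d)$, and integration of $\ev^*(\alpha,\ldots,\alpha)$ over it is, by definition, the corresponding term of $\tau_X^G(\alpha)$; summing over $(n,d)$ recovers $\tau_X^G$. The fiber over $\infty$ is the union over decompositions $d_0+\cdots+d_r=d$ and $I_1\sqcup\cdots\sqcup I_r=\{1,\ldots,n\}$ of the fiber products
$$\bigl(\ol{\M}_{g,r}^{\fr}(C\times X\qu G,(1,d_0))\times_{(\ol{\cI}_{X\qu G})^r}\prod_{j=1}^r\ol{\M}^G_{|I_j|,1}(\bA,X,d_j)\bigr)/(\C^\times)^r.$$
The crucial ingredient here is a splitting formula for the virtual class at the $\infty$ fiber: the obstruction theory is compatible with the fiber product over the rigidified inertia stack $\ol{\cI}_{X\qu G}$ along which the graph factor and the affine factors are glued at the nodes formed as $\delta\to\infty$ (the orbifold gluing axiom of \cite{agv:gw}). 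I would check that the $(\C^\times)^r$-quotient together with the framings in $\ol{\M}_{g,r}^{\fr}$ is exactly what reinterprets the equivariant parameters as Chern classes of the cotangent lines at the gluing markings. The $n$ input markings sit on the affine factors, so the push-forward $\ev_{\infty,d_j,*}\ev^*(\alpha,\ldots,\alpha)$ over each affine factor is the corresponding term of $\kappa_X^G(\alpha)\in H(\ol{\cI}_{X\qu G})$, and integration over the graph factor of the product of these outputs is the corresponding term of $\tau_{X\qu G}$.

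With the splitting in hand the remaining bookkeeping is routine: the sum over the partitions $I_1\sqcup\cdots\sqcup I_r$ together with the factorials $1/|I_j|!$ from symmetrizing markings reassembles the per-factor contributions into the exponential structure of the trace, while the sum over $d_0,\ldots,d_r$ factors the Novikov weight as $q^d=q^{d_0}\prod_j q^{d_j}$. The upshot is that integration over $\rho^{-1}(\infty)$ equals $\bigl(\tau_{X\qu G}\circ\kappa_X^G\bigr)(\alpha)$, so equating with the $\rho^{-1}(0)$ side yields $\tau_X^G=\tau_{X\qu G}\circ\kappa_X^G$.

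The hard part will be justifying that this clean identity holds precisely in the adiabatic limit $k\to\infty$. For finite $k$ the description of $\rho^{-1}(\infty)$ acquires extra wall-crossing strata, namely scaled gauged maps whose scaling sits at an intermediate value where Mundet semistability for $\ti{X}^k$ changes type, and the virtual integrals over these strata contribute the correction terms that measure the failure of \eqref{witten} to commute. I would control them by a weight estimate showing that each wall-crossing contribution carries a strictly positive power of the linearization shift and is therefore suppressed as $k\to\infty$; establishing this vanishing, and commuting the limit with the infinite sum over $(n,d)$ defining the potentials, is the genuine obstacle, and is exactly where the properness of the master space and the gluing compatibility of the obstruction theory carry the argument.
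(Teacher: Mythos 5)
Your proposal takes essentially the same route as the paper's proof: both use the master space $\ol{\M}^G_{n,1}(C,X,d)$ with its forgetful map $\rho$ to $\ol{\M}_{0,1}\cong\P^1$, the equivalence of the fibers $\rho^{-1}(0)$ and $\rho^{-1}(\infty)$ recorded in \eqref{fibers}, and the identical combinatorial bookkeeping (the $n!/(i_1!\cdots i_r!\,r!)$ unordered partitions and factorization of Novikov weights $q^d = q^{d_0}\prod_j q^{d_j}$) to identify the two fiber integrals with $\tau_X^G$ and $\tau_{X\qu G}\circ\kappa_X^G$. Your additional detail on the relative obstruction theory, the virtual-class splitting over $\ol{\cI}_{X\qu G}$, and the suppression of wall-crossing corrections as $k\to\infty$ spells out precisely the points the paper states without proof and defers to \cite{qk2}, \cite{qk3}, \cite{reduc}.
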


Before giving the proof perhaps we should begin by explaining what
applications we have in mind for gauged Gromov-Witten invariants of
this type and the adiabatic limit theorem in particular.  Many
interesting varieties in algebraic geometry have presentations as git
quotients.  The Grassmannian and projective toric varieties are
obvious examples; well-studied also are quiver varieties such as the
moduli of framed sheaves on the projective plane via the
Atiyah-Drinfeld-Hitchin-Manin construction.  In each of these cases,
the moduli stacks of gauged maps are substantially simpler than the
moduli stacks of maps to the git quotients.  This is because the
``upstairs spaces'' are affine, and so the moduli spaces (at least in
the case without markings) consist simply of a bundle with section
up to equivalence.  In many cases this means that the gauged
Gromov-Witten invariants can be explicitly computed, even though the
Gromov-Witten invariants of the git quotient cannot. Sample
applications of the quantum Kirwan map and the adiabatic limit theorem
include presentations of the quantum cohomology rings of toric
varieties (more generally toric stacks with projective coarse moduli
spaces) \cite{gw:surject} and formulas for quantum differential
equations on, for example, the moduli space of framed sheaves on the
projective plane \cite{cross}.  More broadly, the gauged Gromov-Witten
invariants often have better conceptual properties than the
Gromov-Witten invariants of the git quotients.  So for example, one
obtains from the adiabatic limit theorem a wall-crossing formula for
Gromov-Witten invariants under variation of git, which in particular
shows invariance of the graph potentials in the case of a crepant
wall-crossing \cite{wall}.

\begin{proof}[Proof of Theorem] Consider the degree $d$ contributions
  from $(\alpha,\ldots, \alpha)$ to $\tau_X^G$ and $\tau_{X \qu G}
  \circ \kappa_X^G$.  The former contribution is the integral of
  $\ev^*(\alpha,\ldots, \alpha)$ over $\ol{\M}^G_n(C,X,d)$.  By
  \eqref{fibers}, this integral is equal to the integral of
  $\ev^*(\alpha,\ldots, \alpha)$ over
$$ \bigcup_{d_0 + \ldots + d_r = d} \bigcup_{I_1,\ldots,I_r }
 \left(  \ol{\M}_{0,r}^{\fr}(C \times X \qu G,(1,d_0)) \times_{\ol{\cI}_{X
      \qu G}^r}
  \prod_{j=1}^r \ol{\M}^G_{|I_j|,1} (\bA,X,d_j) \right) / 
 (\C^\times)^r
 .$$
  With $i_j = |I_j|$ the integral of $\ev^*(\alpha,\ldots, \alpha)$
  can be written as the push-forward of $\prod_{j=1}^r {\ev^*
    \alpha^{\otimes i_j}/i_j!} $ under the product of evaluation maps
  $$ \ev_{\infty,d_j,*}: \ol{\M}^G_{i_j,1} (\bA,X,d_j) \to
  H(\ol{\cI}_{X \qu G})^r $$
followed by integration over $\ol{\M}_{0,r}^G(X \qu G,d_0)$.  Taking
into account the number $n!/i_1! \ldots i_r!r!$ of unordered
partitions $I_1,\ldots,I_r$ of the given sizes $i_1,\ldots, i_r$, this
composition is equal to the degree $d$ contribution from
$(\alpha,\ldots, \alpha)$ to $ \tau_{X \qu G} \circ \kappa_X^G$.
\end{proof}

\section{Scaled curves}

Scaled curves are curves with a section of the projectivized dualizing
sheaf incorporated, intended to give complex analogs of spaces
introduced by Stasheff \cite{st:hs} such as the multiplihedron,
cyclohedron etc.  The commutativity of diagrams such as \eqref{witten}
will follow from divisor class relations in the moduli space of scaled
curves, in a way similar to the proof of associativity of the quantum
product via the divisor class relation in the moduli space of stable,
$4$-marked genus $0$ curves.

Recall from Deligne-Mumford \cite{dm:irr} and Behrend-Manin
\cite[Definition 2.1]{bm:gw} the definition of stable and prestable
curves.  A {\em prestable curve} over a scheme $S$ is a flat proper
morphism $\pi: C \to S$ of schemes such that the geometric fibers of
$\pi$ are reduced, connected, one-dimensional and have at most
ordinary double points (nodes) as singularities.  A {\em marked
  prestable curve} over $S$ is a prestable curve $\pi: C \to S$
equipped with a tuple $\ul{z} = (z_1,\ldots,z_n): S \to C^n$ of
distinct non-singular sections.  A {\em morphism} $p : C \to D$ of
prestable curves over $S$ is an $S$-morphism of schemes, such that for
every geometric point $s$ of $S$ we have (a) if $\eta$ is the generic
point of an irreducible component of $D_s$, then the fiber of $p_s$
over $\eta$ is a finite $\eta$-scheme of degree at most one, (b) if
$C'$ is the normalization of an irreducible component of $C_s$, then
$p_s(C')$ is a single point only if $C'$ is rational.  A prestable
curve is {\em stable} if it has finitely many automorphisms.  Denote
by $\ol{\M}_{g,n}$ the proper Deligne-Mumford stack of stable curves
of genus $g$ with $n$ markings \cite{dm:irr}.  The stack
$\ol{\MM}_{g,n}$ of prestable curves of genus $g$ with $n$ markings is
an Artin stack locally of finite type \cite[Proposition 2]{be:gw}.

\begin{figure}[ht]
\begin{picture}(0,0)%
\includegraphics{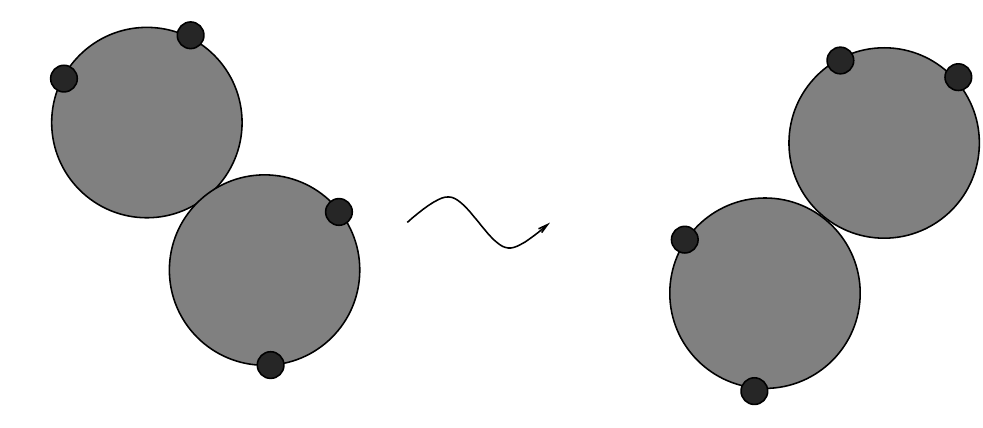}%
\end{picture}%
\setlength{\unitlength}{3947sp}%
\begingroup\makeatletter\ifx\SetFigFont\undefined%
\gdef\SetFigFont#1#2#3#4#5{%
  \reset@font\fontsize{#1}{#2pt}%
  \fontfamily{#3}\fontseries{#4}\fontshape{#5}%
  \selectfont}%
\fi\endgroup%
\begin{picture}(4709,2035)(2021,-1137)
\put(3107,-1080){\makebox(0,0)[lb]{\smash{{\SetFigFont{8}{9.6}{\rmdefault}{\mddefault}{\updefault}{\color[rgb]{0,0,0}$z_0$}%
}}}}
\put(5759,-1086){\makebox(0,0)[lb]{\smash{{\SetFigFont{8}{9.6}{\rmdefault}{\mddefault}{\updefault}{\color[rgb]{0,0,0}$z_0$}%
}}}}
\put(3535, 82){\makebox(0,0)[lb]{\smash{{\SetFigFont{8}{9.6}{\rmdefault}{\mddefault}{\updefault}{\color[rgb]{0,0,0}$z_3$}%
}}}}
\put(6555,682){\makebox(0,0)[lb]{\smash{{\SetFigFont{8}{9.6}{\rmdefault}{\mddefault}{\updefault}{\color[rgb]{0,0,0}$z_3$}%
}}}}
\put(5691,741){\makebox(0,0)[lb]{\smash{{\SetFigFont{8}{9.6}{\rmdefault}{\mddefault}{\updefault}{\color[rgb]{0,0,0}$z_2$}%
}}}}
\put(2036,686){\makebox(0,0)[lb]{\smash{{\SetFigFont{8}{9.6}{\rmdefault}{\mddefault}{\updefault}{\color[rgb]{0,0,0}$z_1$}%
}}}}
\put(3072,787){\makebox(0,0)[lb]{\smash{{\SetFigFont{8}{9.6}{\rmdefault}{\mddefault}{\updefault}{\color[rgb]{0,0,0}$z_2$}%
}}}}
\put(4937,-113){\makebox(0,0)[lb]{\smash{{\SetFigFont{8}{9.6}{\rmdefault}{\mddefault}{\updefault}{\color[rgb]{0,0,0}$z_1$}%
}}}}
\end{picture}%
\caption{Associativity divisor relation} 
\label{assoc} 
\end{figure} 

The following constructions give complex analogs of the spaces
constructed in Stasheff \cite{st:hs}.  For any family of possibly
nodal curves $C \to S$ we denote by $\omega_C$ the relative dualizing
sheaf defined for example in Arbarello-Cornalba-Griffiths
\cite[p. 97]{ar:alg2}.  Similarly for any morphism $\hat{C} \to C$ we
denote by $ \omega_{\hat{C}/C}$ the relative dualizing sheaf and
$\P(\omega_{\hat{C}/C} \oplus \mO_{\hat{C}}) \to \hat{C} $ the
projectivization.  A {\em scaling} is a section
$$ \delta: \hat{C} \to \P(\omega_{\hat{C}/C} \oplus \mO_{\hat{C}}),
\quad \P(\omega_{\hat{C}/C} \oplus \mO_{\hat{C}}) =
(\omega_{\hat{C}/C} \oplus \mO_{\hat{C}})^\times / \C^\times .$$
If $\hat{C} \to C$ is an isomorphism then $\omega_{\hat{C}/C}$ is
trivial:
$$ (\hat{C} \cong C) \implies (\P(\omega_{\hat{C}/C} \oplus
\mO_{\hat{C}}) \cong C \times \P^1) .$$
In this case a scaling $\delta$ is a section $C \to \P^1$, and
$\delta$ is required to be constant.  Thus the space of scalings on an
unmarked, irreducible curve is $\P^1$.

Scalings on nodal curves with markings are required to satisfy the
following properties.  First, $\delta$ should satisfy the {\em
  affinization} property that on any component $\hat{C}_i$ of
$\hat{C}$ on which $\delta$ is finite and non-zero, $\delta$ has no
zeroes.  In particular, this implies that in the case $\hat{C} \cong
C$, then $\delta$ is a constant section as in the last paragraph,
while on any component $\hat{C}_i$ of $\hat{C}$ with finite non-zero
scaling which maps to a point in $C$, $\delta$ has a single double
pole and so defines an affine structure on the complement of the pole.
To define the second property, note that any morphism $\hat{C} \to C$
of class $[C]$ defines a {\em rooted tree} whose vertices are
components $\hat{C}_i$ of $\hat{C}$, whose edges are nodes $w_j \in
\hat{C}$, and whose root vertex is the vertex corresponding to the
component $\hat{C}_0$ that maps isomorphically to $C$.  Let $\cT$
denote the set of indices of {\em terminal} components $\hat{C}_i$
that meet only one other component of $\hat{C}$:
$$ \cT = \{ i \ | \ \# \{ j \neq i | \hat{C}_j \cap \hat{C}_i \neq
\emptyset \} = 1 \} $$
as in Figure \ref{leaves}. The {\em bubble components} are the
components of $\hat{C}$ mapping to a point in $C$.
%
\begin{figure}[ht]
\begin{picture}(0,0)%
\includegraphics{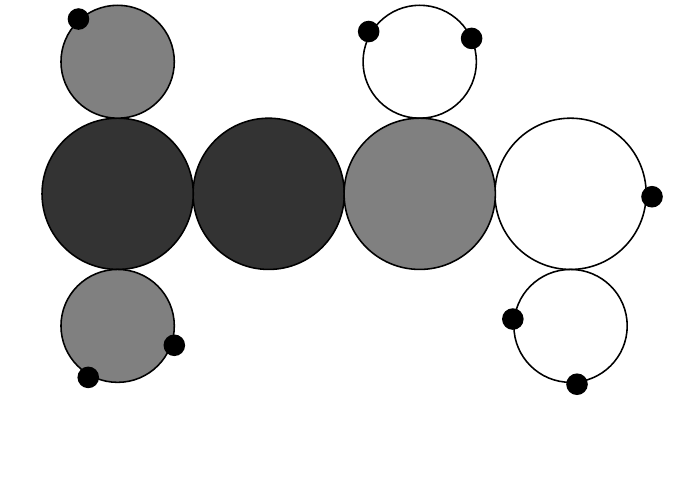}%
\end{picture}%
\setlength{\unitlength}{3947sp}%
\begingroup\makeatletter\ifx\SetFigFont\undefined%
\gdef\SetFigFont#1#2#3#4#5{%
  \reset@font\fontsize{#1}{#2pt}%
  \fontfamily{#3}\fontseries{#4}\fontshape{#5}%
  \selectfont}%
\fi\endgroup%
\begin{picture}(3263,2393)(1695,-4257)
\put(1923,-3849){\makebox(0,0)[lb]{\smash{{\SetFigFont{8}{9.6}{\rmdefault}{\mddefault}{\updefault}{\color[rgb]{0,0,0}$z_2$}%
}}}}
\put(3996,-2049){\makebox(0,0)[lb]{\smash{{\SetFigFont{8}{9.6}{\rmdefault}{\mddefault}{\updefault}{\color[rgb]{0,0,0}$z_5$}%
}}}}
\put(4943,-2849){\makebox(0,0)[lb]{\smash{{\SetFigFont{8}{9.6}{\rmdefault}{\mddefault}{\updefault}{\color[rgb]{0,0,0}$z_6$}%
}}}}
\put(2903,-2727){\makebox(0,0)[lb]{\smash{{\SetFigFont{8}{9.6}{\rmdefault}{\mddefault}{\updefault}{\color[rgb]{1,1,1}root}%
}}}}
\put(2690,-2902){\makebox(0,0)[lb]{\smash{{\SetFigFont{8}{9.6}{\rmdefault}{\mddefault}{\updefault}{\color[rgb]{1,1,1}component}%
}}}}
\put(1710,-4202){\makebox(0,0)[lb]{\smash{{\SetFigFont{8}{9.6}{\rmdefault}{\mddefault}{\updefault}{\color[rgb]{0,0,0}expression $\tau((\kappa(z_1z_2)\kappa(z_3))\kappa( (z_4z_5)(z_6(z_7 z_8))))$}%
}}}}
\put(1784,-1975){\makebox(0,0)[lb]{\smash{{\SetFigFont{8}{9.6}{\rmdefault}{\mddefault}{\updefault}{\color[rgb]{0,0,0}$z_3$}%
}}}}
\put(3210,-1975){\makebox(0,0)[lb]{\smash{{\SetFigFont{8}{9.6}{\rmdefault}{\mddefault}{\updefault}{\color[rgb]{0,0,0}$z_4$}%
}}}}
\put(3870,-3368){\makebox(0,0)[lb]{\smash{{\SetFigFont{8}{9.6}{\rmdefault}{\mddefault}{\updefault}{\color[rgb]{0,0,0}$z_8$}%
}}}}
\put(2602,-3622){\makebox(0,0)[lb]{\smash{{\SetFigFont{8}{9.6}{\rmdefault}{\mddefault}{\updefault}{\color[rgb]{0,0,0}$z_1$}%
}}}}
\put(4389,-3876){\makebox(0,0)[lb]{\smash{{\SetFigFont{8}{9.6}{\rmdefault}{\mddefault}{\updefault}{\color[rgb]{0,0,0}$z_7$}%
}}}}
\end{picture}%
\caption{A scaled marked curve}
\label{leaves}
\end{figure} 
For each terminal component $\hat{C}_i, i \in \cT$ there is a
canonical non-self-crossing path of components $\hat{C}_{i,0} =
\hat{C}_0,\ldots, \hat{C}_{i,k(i)} = \hat{C}_{i}$.  Define a partial
order on components by $\hat{C}_{i,j} \preceq \hat{C}_{i,k}$ for $j
\leq k$.  The {\em monotonicity property} requires that $\delta$ is
finite and non-zero on at most one of these (gray shaded) components,
say $\hat{C}_{i, f(i)}$, and
\begin{equation} \label{monotone} \delta |  \hat{C}_{i,j} = \begin{cases} \infty & j < f(i) \\
                                             0 & j > f(i) \end{cases}
  . \end{equation}
We call $\hat{C}_{i,f(i)}$ a {\em transition component}.  That is, the
scaling $\delta$ is infinite on the components before the transition
components and zero on the components after the transition components,
in the ordering $\preceq$.  See Figure \ref{leaves}.  In addition the
{\em marking property} requires that the scaling is finite at the
markings:
$$\delta(z_i) < \infty,  \quad \forall i =1,\ldots, n .$$

\begin{definition}
 \label{Ccurve}
 A {\em prestable scaled curve} with target a smooth
  projective curve $C$ is a morphism from a prestable curve $\hat{C}$ to
  $C$ of class $[C]$ equipped with a section $\delta$ and $n$ markings
  $\ul{z} = (z_1,\ldots, z_n)$ satisfying the affinization,
  monotonicity and marking properties.  Morphisms of prestable scaled
  curves are diagrams
$$ \begin{diagram} \node{ \hat{C}_1} \arrow{s} \arrow{e,t}{\varphi}
  \node{ \hat{C}_2 } \arrow{s} \\ \node{S_1} \arrow{e}
  \node{S_2} \end{diagram}, \quad (D\varphi^*) \varphi^*( \delta_2) =
\delta_1, \quad \varphi(z_{i,1}) = z_{i,2}, \ \ \forall i = 1,\ldots,
n $$
where the top arrow is a morphism of prestable curves and
$$ D\varphi^* : \varphi^* \P(\omega_{\hat{C}_2/C} \oplus \mO_{\hat{C}_2}) \to
\P(\omega_{\hat{C}_1/C} \oplus \mO_{\hat{C}_1})$$ 
is the associated morphism of projectivized relative dualizing
sheaves.  A scaled curve is {\em stable} if on each bubble component
$\hat{C}_i \subset \hat{C}$ (that is, component mapping to a point in
$C$) there are at least three special points (markings or nodes),
$$ (\delta| \hat{C}_i \in \{0, \infty\} ) \ \implies \ \# (( \{
z_i \} \cup \{ w_j \} ) \cap \hat{C}_i) \ge 3 $$
or the scaling is finite and non-zero and there are least two special points
$$ (\delta| \hat{C}_i \notin \{0, \infty\} ) \ \implies \ \# (( \{
z_i \} \cup \{ w_j \} ) \cap \hat{C}_i) \ge 2 .$$
\end{definition} 

Introduce the following notation for moduli spaces.  Let
$\ol{\MM}_{n,1}(C)$ denote the category of prestable $n$-marked scaled
curves and $\ol{\M}_{n,1}(C)$ the subcategory of stable $n$-marked
scaled curves.

The {\em combinatorial type} of a prestable marked scaled curve is
defined as follows.  Given such $(\hat{C},u: \hat{C} \to C,
\ul{z},\delta)$. Let $\Gamma$ be the graph whose vertex set
$\Ver(\Gamma)$ is the set of irreducible components of $C$, finite
edges $\Edge_{< \infty}(\Gamma)$ correspond to nodes, semi-infinite
edges $\Edge_\infty(\Gamma)$ correspond to markings.
The graph \(\Gamma\) equipped with
the labelling of semi-infinite edges by $\{ 1,\ldots , n\}$ a
distinguished {\em root vertex} $v_0 \in \Ver(\Gamma)$ corresponding
to the root component and a set of {\em transition vertices}
$\Ver^t(\Gamma) \subset \Ver(\Gamma)$ corresponding to the transition
components.  Graphically we represent a combinatorial type as a graph
with transition vertices shaded by grey, and the vertices lying on
three levels depending on whether they occur before or after the
transition vertices.  See Figure \ref{spider}.  Note that the
combinatorial type is functorial; in particular any automorphism of
prestable marked scaled curves induces an automorphism of the
corresponding type, that is, an automorphism of the graph preserving
the additional data.

We note that the graphical representation of the
combinatorial type of a curve can be viewed as the graph of
a Morse/height function on the curve. In general this gives
a spider like figure with the principal component being the
body of the spider. From this perspective the paths used in
the monotonicity property of scalings are the legs of the
spider. 

\begin{figure}[ht]
\includegraphics[height=2in]{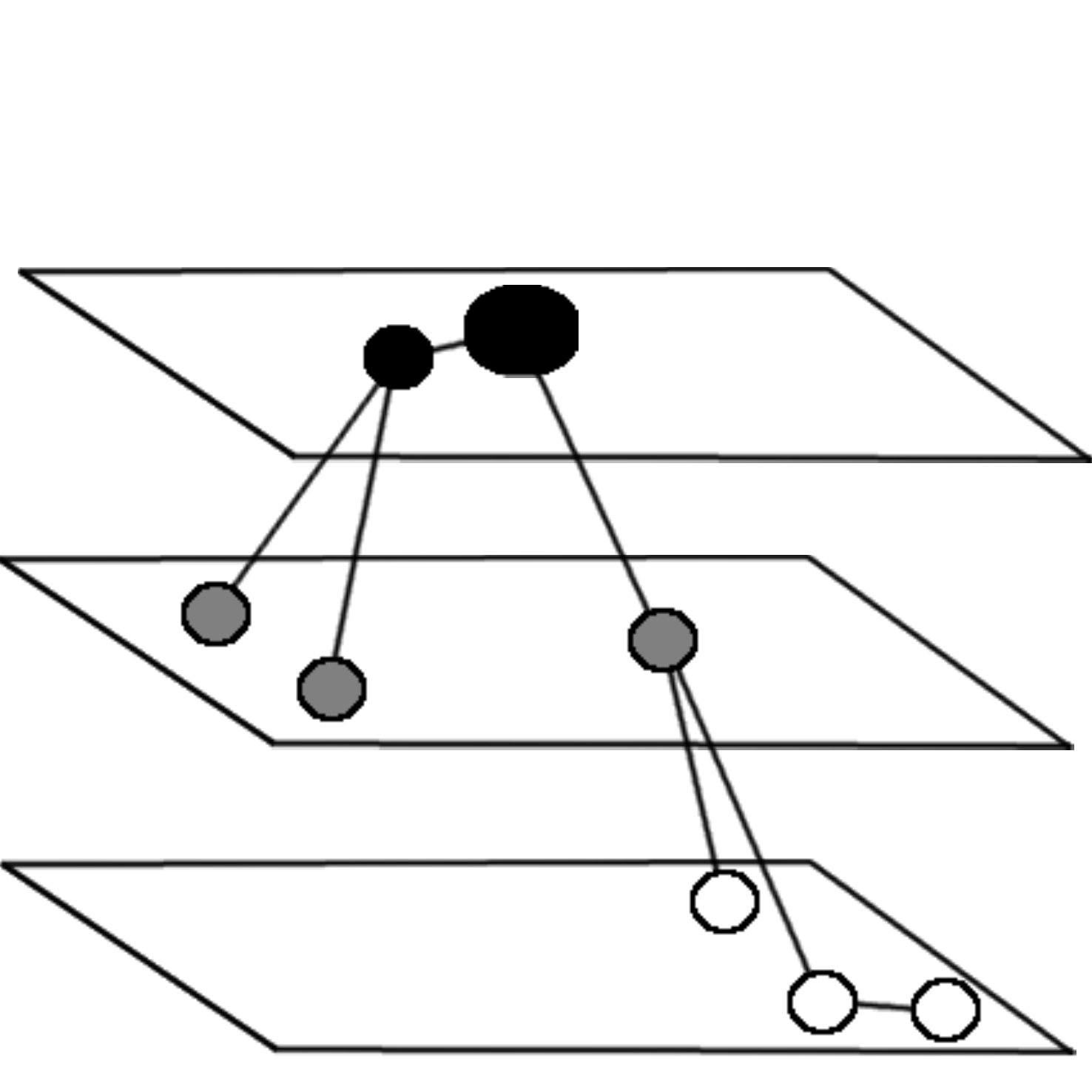}
\caption{Combinatorial type of a scaled marked curve} 
\label{spider}
\end{figure}

\begin{example}
\begin{enumerate}
\item For $n = 0$, no bubbling is possible and $\ol{\M}_{0,1}(C)$ is
  the projective line, $ \ol{\M}_{0,1}(C) \cong \P^1 .$
\item For $n = 1$, $\ol{\M}_{1,1}(C)$ consists of configurations
  $\M_{1,1}(C) \cong C \times \C$ with irreducible domain and finite
  scaling;  configurations $\ol{\M}_{1,1} \ssm \M_{1,1}$ with one
  component $\hat{C}_0 \cong C$ with infinite scaling $\delta |
  \hat{C}_0$, and another component $\hat{C}_1$ mapping trivially to
  $C$, equipped with a one-form $\delta | \hat{C}_1$ with a double
  pole at the node and a marking $z_1 \in \hat{C}_1$.  Thus $
  \ol{\M}_{1,1}(C) \cong C \times \P^1 .$
\item For $n= 2$, $\ol{\M}_{2,1}(C)$ consists of configurations
  $\M_{2,1}(C)$ with two distinct points $z_1, z_2 \in C$ and a
  scaling $\delta \in \P^1$; configurations $\M_{2,1,\Gamma_1}$ where
  the two points $z_1,z_2$ have come together and bubbled off onto a
  curve $z_1,z_2 \in \hat{C}_1$ with zero scaling
  $\delta | \hat{C}_1$, so that
  $\M_{2,1,\Gamma_1} \cong C \times \P^1$; configurations
  $\M_{2,1,\Gamma_2}$ with a root component $\hat{C}_0$ with infinite
  scaling $\delta | \hat{C}_0$, and two components
  $\hat{C}_1,\hat{C}_2$ with non-trivial scalings
  $\delta | \hat{C}_1, \delta | \hat{C}_2$ containing markings
  $z_1 \in \hat{C}_1, z_2 \in \hat{C}_2$; a stratum
  $\M_{2,1,\Gamma_3}$ of configurations with a component $\hat{C}_1$
  containing two markings $z_1,z_2 \in \hat{C}_1$ and
  $\delta | \hat{C}_1$ non-zero; a stratum $\M_{2,1,\Gamma_4}$
  containing with three components, one $\hat{C}_0$ mapping
  isomorphically to $C$; one $\hat{C}_1$ with two nodes and a one form
  $\delta | \hat{C}_1$ with a double pole at the node attaching to
  $\hat{C}_0$; and a component $\hat{C}_2$ with two markings
  $z_1,z_2 \in \hat{C}_2$, a node, and vanishing scaling
  $\delta | \hat{C}_2$; and a stratum $\M_{2,1,\Gamma_5}$ containing
  the root component $\hat{C}_0$, a component $\hat{C}_1$ with
  infinite scaling with three nodes, and two components
  $\hat{C}_2, \hat{C}_3$ with finite, non-zero scaling, each
  containing a node and a marking.  
\end{enumerate} 
\end{example}

\begin{remark} The extension of the one-form in a family of scaled
  curves may be explicitly described as follows.  On each component of
  the limit, the one-form is determined by the limiting behavior of
  the product of deformation parameters for the nodes connecting that
  component to the root component of the limit.  Let
$$\hat{C} \to S,\delta : \hat{C} \to
\P(\omega_{\hat{C}/C\times S} \oplus
  \mO_{\hat{C}}),\ul{z}: S \to \hat{C}^n$$
  be a family of scaled curves over a punctured curve $S = \ol{S} - \{
  \infty \} $ and $\hat{C}_\infty$ a curve over $\infty$ extending the family $\hat{C}$.
Let $\Def(\hat{C}_\infty)/\Def_\Gamma(\hat{C}_\infty)$
  denote the deformation space of the curve $\hat{C}_\infty$ normal to
  the stratum of curves of the same combinatorial type $\Gamma$ as
  $\hat{C}_\infty$.  This normal deformation space is canonically
  identified with the sum of products of cotangent lines at the nodes
$$ \Def(\hat{C}_\infty)/\Def_\Gamma(\hat{C}_\infty) =  \sum_{w} 
T^\dual_w \hat{C}_{i_-(w)} \otimes T^\dual_w \hat{C}_{i_+(w)} $$
where $\hat{C}_{i_\pm(w)}$ are components of $\hat{C}_\infty $
adjacent to $w$, see \cite[p. 176]{ar:alg2}.  Over the deformation
space $\Def(\hat{C}_\infty)$ lives a semiversal family, universal if
the curve is stable.  Given family of curves $\hat{C} \to S$ as above
the curve $\hat{C}$ is obtained by pull-back of the semiversal family
by a map
$$ S  \to \sum_{w} T^\dual_w \hat{C}_{i_-(w)} \otimes
T^\dual_w \hat{C}_{i_+(w)}, \quad z \mapsto (\delta_w(z)) $$
describing the curves as local deformations (non-uniquely, since the
curves themselves may be only prestable.)  Let
$$ \hat{C}_0 = \hat{C}_{i,0}, \ldots, \hat{C}_{i,l(i)} := \hat{C}_i $$
denote the path of components from the root component, and
$$ w_{i,0},\ldots, w_{i,l(i)- 1} \in \hat{C}_\infty $$
the corresponding sequence of nodes.  The nodes $w_{i,j}, w_{i,j+1}$
lie 
in the same component $C_{i,j+1}$ and we
have a canonical isomorphism
$$ T_{w_{i,j}}^\dual C_{i,j+1} \cong T_{w_{i,j+1}}
C_{i,j+1} $$
corresponding to the relation of local coordinates $z_+ = 1/z_-$ near
$w_{i,j}$.  Deformation parameters for this chain lie in the space
%
\begin{multline} 
  \Hom(T_{w_{i,0}}^{\dual} \hat{C}_{i,0}, T_{w_{i,1}}^{\dual} \hat{C}_{i,1})
  \oplus \Hom(T_{w_{i,1}}^{\dual} \hat{C}_{i,1}, T_{w_{i,2}}^{\dual}
  \hat{C}_{i,2}) \ldots \\ \oplus \Hom(T_{w_{i,l(i) - 1}}^{\dual}
  \hat{C}_{i,l(i)-1}, T_{w_{i,l(i)}}^{\dual} \hat{C}_{i,l(i)})
    .\end{multline}
In particular, the product of deformation parameters
\begin{equation} 
\label{product}
\gamma_{w_{i,0}}(z) \cdots \ldots \cdot \gamma_{w_{i,l(i) -1}}(z)
\in \Hom(T_{w_0}^{\dual} \hat{C}_{i,0}, T_{w_i}^{\dual} \hat{C}_{i,l(i)}) \end{equation}
is well-defined.  The product represents the {\em scale} at which the
bubble component $\hat{C}_i$ forms in comparison with $\hat{C}_0 =
\hat{C}_{i,0}$, that is, the ratio between the derivatives of local
coordinates on $\hat{C}_i$ and $\hat{C}_0$.  If $z$ is a point in
$\hat{C}_i$ then we also have a canonical isomorphism
$ T_z^\dual \hat{C}_i \to T_{w_{i,0}} \hat{C}_i .$
The product \eqref{product} gives an isomorphism
$ T_z^\dual \hat{C}_i \to T_{w_0}^{\dual} \hat{C}_{0} .$
The extension of $\delta$ over $\hat{C}_i$ is given by
\begin{equation} \label{oneform} \delta | \hat{C}_i = \lim_{z \to 0}
  \delta(z) (\gamma_{w_{i,0}}(z) \cdots \ldots \cdot \gamma_{w_{i,l(i)
      -1}}(z)) \end{equation}
the ratio of the scale of the bubble component with the 
parameter $\delta(z)^{-1}$.  This ends the Remark. 
\end{remark}

One may view a scaled curve with infinite scaling on the root
component as a nodal curve formed from the root component and a
collection of bubble trees as follows.  

\begin{definition} \label{affine} An {\em affine prestable
  scaled curve} consists of a tuple $(C,\delta,\ul{z})$ where $C$ is a
  connected projective nodal curve, $\delta: C \to \P( \omega_C \oplus
  \mO_C)$ a section of the projectivized dualizing sheaf, and $\ul{z}
  = (z_0,\ldots,z_n)$ non-singular, distinct points, such that
\begin{enumerate} 
\item $\delta$ is monotone in the following sense: For each terminal
  component $\hat{C}_{i}, i \in \cT$ there is a canonical
  non-self-crossing path of components
$$\hat{C}_{l(i),0} = \hat{C}_0,\ldots,
  \hat{C}_{i,k(i)} = \hat{C}_i .$$  
For any such non-self-crossing path of
components starting with a root component, that $\delta$ is finite and
non-zero on at most one of these {\em transition components}, say
$\hat{C}_{i, f(i)}$, and the scaling is infinite for all components
before the transition component and zero for components after the
transition component:
$$ \delta | \hat{C}_{i,j} = \begin{cases} \infty & j < f(i) \\ 0 & j
  > f(i) \end{cases} . $$
\item $\delta$ is infinite at $z_0$, and finite at $z_1,\ldots, z_n$.
\end{enumerate} 
A prestable affine scaled curve is {\em stable} if it has finitely
many automorphisms, or equivalently, if each component ${C}_i \subset
{C}$ has at least three special points (markings or nodes),
$$ (\delta| {C}_i \in \{0, \infty\} ) \ \implies \ \# (( \{
z_i \} \cup \{ w_j \} ) \cap {C}_i) \ge 3 $$
or the scaling is finite and non-zero and there are least two special points
$$ (\delta| {C}_i \notin \{0, \infty\} ) \ \implies \ \# (( \{
z_i \} \cup \{ w_j \} ) \cap {C}_i) \ge 2 .$$
\end{definition} 

We will see below in Theorem \ref{scaledproper} that scaled marked
curves have no automorphisms.  Examples of stable affine scaled curves
are shown in Figure \ref{affine}.  Denote the moduli stack of
prestable affine scaled curves resp. stable affine $n$-marked scaled
curves by $\ol{\MM}_{n,1}(\bA)$ resp. $\ol{\M}_{n,1}(\bA)$.

\begin{figure}[ht]
\begin{picture}(0,0)%
\includegraphics{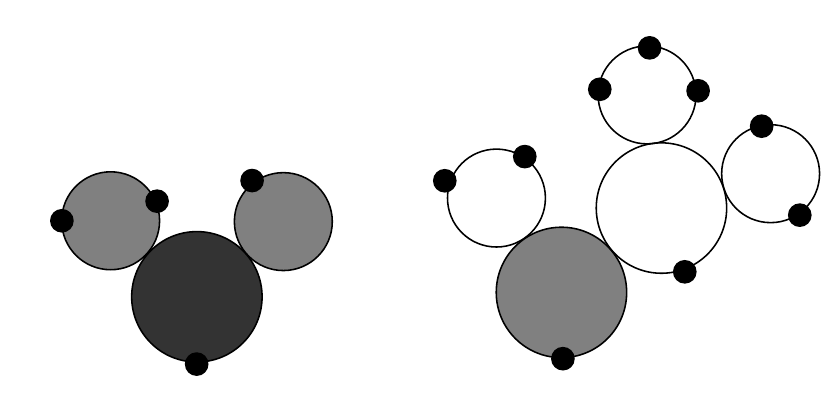}%
\end{picture}%
\setlength{\unitlength}{3947sp}%
\begingroup\makeatletter\ifx\SetFigFont\undefined%
\gdef\SetFigFont#1#2#3#4#5{%
  \reset@font\fontsize{#1}{#2pt}%
  \fontfamily{#3}\fontseries{#4}\fontshape{#5}%
  \selectfont}%
\fi\endgroup%
\begin{picture}(3942,1975)(1401,-3013)
\put(2230,-2970){\makebox(0,0)[lb]{\smash{{\SetFigFont{6}{7.2}{\rmdefault}{\mddefault}{\updefault}{\color[rgb]{0,0,0}$z_0$}%
}}}}
\put(3988,-2913){\makebox(0,0)[lb]{\smash{{\SetFigFont{6}{7.2}{\rmdefault}{\mddefault}{\updefault}{\color[rgb]{0,0,0}$z_0$}%
}}}}
\put(4619,-2548){\makebox(0,0)[lb]{\smash{{\SetFigFont{6}{7.2}{\rmdefault}{\mddefault}{\updefault}{\color[rgb]{0,0,0}$z_1$}%
}}}}
\put(5172,-2245){\makebox(0,0)[lb]{\smash{{\SetFigFont{6}{7.2}{\rmdefault}{\mddefault}{\updefault}{\color[rgb]{0,0,0}$z_2$}%
}}}}
\put(4969,-1500){\makebox(0,0)[lb]{\smash{{\SetFigFont{6}{7.2}{\rmdefault}{\mddefault}{\updefault}{\color[rgb]{0,0,0}$z_3$}%
}}}}
\put(4796,-1354){\makebox(0,0)[lb]{\smash{{\SetFigFont{6}{7.2}{\rmdefault}{\mddefault}{\updefault}{\color[rgb]{0,0,0}$z_4$}%
}}}}
\put(4478,-1129){\makebox(0,0)[lb]{\smash{{\SetFigFont{6}{7.2}{\rmdefault}{\mddefault}{\updefault}{\color[rgb]{0,0,0}$z_5$}%
}}}}
\put(3930,-1359){\makebox(0,0)[lb]{\smash{{\SetFigFont{6}{7.2}{\rmdefault}{\mddefault}{\updefault}{\color[rgb]{0,0,0}$z_6$}%
}}}}
\put(3800,-1599){\makebox(0,0)[lb]{\smash{{\SetFigFont{6}{7.2}{\rmdefault}{\mddefault}{\updefault}{\color[rgb]{0,0,0}$z_7$}%
}}}}
\put(3304,-1771){\makebox(0,0)[lb]{\smash{{\SetFigFont{6}{7.2}{\rmdefault}{\mddefault}{\updefault}{\color[rgb]{0,0,0}$z_8$}%
}}}}
\put(2449,-1734){\makebox(0,0)[lb]{\smash{{\SetFigFont{6}{7.2}{\rmdefault}{\mddefault}{\updefault}{\color[rgb]{0,0,0}$z_1$}%
}}}}
\put(2152,-1886){\makebox(0,0)[lb]{\smash{{\SetFigFont{6}{7.2}{\rmdefault}{\mddefault}{\updefault}{\color[rgb]{0,0,0}$z_2$}%
}}}}
\put(1416,-1896){\makebox(0,0)[lb]{\smash{{\SetFigFont{6}{7.2}{\rmdefault}{\mddefault}{\updefault}{\color[rgb]{0,0,0}$z_3$}%
}}}}
\end{picture}%

\caption{Examples of stable affine scaled curves}
\label{affine}
\end{figure}

\begin{theorem} \label{scaledproper} For each $n \ge 0$ and smooth
  projective curve $C$ the moduli stack $\ol{\M}_{n,1}(C)$ 
   resp. $\ol{\M}_{n,1}(\bA)$ of stable scaled
  affine curves is a proper scheme locally
  isomorphic to a product of a number of copies of $C$ with a toric
  variety.  The stack $\ol{\MM}_{n,1}(C)$ resp. $\ol{\MM}_{n,1}(\bA)$
  of prestable scaled curves is an Artin stack of locally finite type.
\end{theorem}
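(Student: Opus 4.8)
The plan is to build the moduli stack in layers, starting from known algebraic stacks, then imposing the scaling datum and the stability condition, and finally establishing properness by the valuative criterion using the one-form extension described in the Remark. I will treat the curve-target case $\ol{\M}_{n,1}(C)$ in detail; the affine case $\ol{\M}_{n,1}(\bA)$ is entirely parallel, with the role of the root component replaced by the marking $z_0$ carrying infinite scaling, and may alternatively be recovered as a boundary piece of the $C$ case with $C\cong\P^1$ after quotienting by the affine automorphisms.

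For the Artin stack claims, I would first recall that the stack of prestable curves $\hat{C}$ of genus $g$ together with a morphism $\hat{C}\to C$ of class $[C]$ and $n$ markings is an Artin stack locally of finite type: it is the stack of prestable maps to $C$ of the given class, fibered over $\ol{\MM}_{g,n}$ (\cite[Proposition 2]{be:gw}). Over this stack the projectivized relative dualizing sheaf $\P(\omega_{\hat{C}/C}\oplus\mO_{\hat{C}})$ is a $\P^1$-bundle, and the space of its sections is representable by a relative morphism scheme, again algebraic and locally of finite type. The affinization, monotonicity, and marking conditions are locally closed conditions on this relative section space—monotonicity constrains the values of $\delta$ along each canonical path and the affinization condition restricts the location of zeros—so $\ol{\MM}_{n,1}(C)$ is an Artin stack locally of finite type. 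Stability, being finiteness of automorphisms, is an open condition in families, and cuts out the open substack $\ol{\M}_{n,1}(C)$.

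To see that $\ol{\M}_{n,1}(C)$ is a scheme, I would first show stable scaled curves carry no nontrivial automorphisms. Any automorphism $\varphi$ must commute with the map to $C$; since the root component $\hat{C}_0$ maps isomorphically to $C$, the restriction of $\varphi$ to $\hat{C}_0$ is the identity. Proceeding outward along the rooted tree, on each bubble component $\hat{C}_i$ the automorphism fixes the attaching node and preserves both $\delta|\hat{C}_i$ and the markings; the stability inequalities—at least three special points when $\delta|\hat{C}_i\in\{0,\infty\}$ and at least two when $\delta|\hat{C}_i$ is finite and nonzero (the finite nonzero one-form already eliminating the scaling automorphisms of the affine line, leaving only translations to be fixed by the special points)—force $\varphi|\hat{C}_i=\mathrm{id}$. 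Hence $\ol{\M}_{n,1}(C)$ is an algebraic space. For the local structure I would compute the versal deformation: the deformations of the root component together with the positions of the attaching points and of the finite markings contribute the factors of $C$, while the smoothings of the nodes contribute coordinates in the lines $\Hom(T^\dual_{w}\hat{C}_{i_-(w)},T^\dual_w\hat{C}_{i_+(w)})$ of the Remark. The monotonicity relation \eqref{monotone} together with the extension formula \eqref{oneform}, which expresses $\delta|\hat{C}_i$ as the limit of $\delta(z)$ times the product \eqref{product} of node parameters, imposes monomial relations among the smoothing and scaling coordinates; these are precisely the defining equations of an affine toric variety, yielding the asserted local isomorphism with a product of copies of $C$ and a toric variety, and in particular showing the algebraic space is a scheme.

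The main obstacle is properness, which I would establish through the valuative criterion. Given a family of stable scaled curves over a punctured curve $S=\ol{S}-\{\infty\}$, the underlying family of prestable maps to $C$ extends over $\infty$ after a finite base change by stable reduction for maps (Behrend-Manin \cite{bm:gw}), producing a limiting domain $\hat{C}_\infty$. The scaling $\delta$ is then extended component by component by the recipe of the Remark: on each bubble component $\hat{C}_i$ the limiting section \eqref{oneform} is the limit of $\delta(z)$ multiplied by the product \eqref{product} of deformation parameters along the path from the root. The key verification is that the resulting $\delta$ automatically satisfies the monotonicity property \eqref{monotone}—the transition component on each path is exactly the unique one along which the accumulated product keeps $\delta$ finite and nonzero, components before it acquiring infinite and those after it acquiring zero scaling—together with the marking property, since $\delta$ is finite at each marking throughout the family. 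Uniqueness of the limit, hence separatedness, follows from the rigidity proved above: any two extensions are related by an automorphism of the limit fixing all the data, and so coincide. This shows $\ol{\M}_{n,1}(C)$ is proper, and combined with the local toric structure it is a proper scheme. The identical arguments, with the marking $z_0$ playing the role of the root and infinite scaling enforced there, give the statements for $\ol{\M}_{n,1}(\bA)$ and $\ol{\MM}_{n,1}(\bA)$.
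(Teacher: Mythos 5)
Your overall architecture (rigidity of automorphisms, toric charts from monomial relations among node-smoothing parameters, valuative criterion via the limit formula \eqref{oneform}) matches the paper's proof, but your properness argument has a genuine gap: you take the limit $\hat{C}_\infty$ of the underlying stable maps and then try to extend $\delta$ component by component on that fixed domain. This fails because the domain of the limiting scaled curve is in general \emph{not} the stable-map limit. At a node of $\hat{C}_\infty$ where the scaling degenerates so that one adjacent component acquires $\delta = \infty$ and the other $\delta = 0$, the section $\delta$ has no continuous extension: the fiber of $\P(\omega_{\hat{C}/C\times S} \oplus \mO_{\hat{C}})$ over the node is a single $\P^1$ and $\delta$ cannot take both limiting values there. (Note also that transition components with finite non-zero scaling and only two special points are stable as scaled curves but unstable as maps, so stabilization after forgetting $\delta$ contracts exactly the components you need.) The paper resolves this by taking the closure of the graph of $\delta$, which replaces such nodes of $\hat{C}^{\st}$ by fibers of $\P(\omega_{\hat{C}^{\st}/C} \oplus \mO_{\hat{C}^{\st}})$, and then computes the twist $\omega_{\hat{C}/C\times S} = \pi^* \omega_{\hat{C}^{\st}/C}(-D_0 - D_\infty)$ on the inserted components to show the extended $\delta$ has no zeroes at $D_0$ and a double pole at $D_\infty$ — i.e., the inserted bubbles are genuine transition components satisfying the affinization property. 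Without this domain modification your construction produces, in these degenerations, an object that is not a scaled curve at all, so the valuative criterion is not verified.

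A secondary weak point is your claim that the affinization, monotonicity and marking properties are ``locally closed conditions'' on the relative section space, from which you deduce that $\ol{\MM}_{n,1}(C)$ is an Artin stack. The allowed locus is a union over combinatorial types of conditions that are locally closed stratum by stratum, but the union is only constructible: it is neither open (constancy-type constraints on nearby smooth domains) nor closed (limits of allowed sections can violate monotonicity), and a constructible subset does not automatically inherit a substack structure compatible with families. The paper avoids this issue entirely by producing smooth charts for $\ol{\MM}_{n,1}(C)$ via the forgetful morphisms $\ol{\M}_{n+k,1}(C) \to \ol{\MM}_{n,1}(C)$, which admit sections locally, as in Behrend's treatment of prestable curves. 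Your automorphism argument, finally, is essentially correct but glosses over why no automorphism can permute components attached at distinct nodes; the paper's observation that each vertex of the tree is determined by the induced partition of the labelled markings (every subtree carries a marking, by stability of terminal components) closes this more cleanly than your outward induction as stated.
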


\begin{proof}  
 Standard arguments on imply that $\ol{\M}_{n,1}(C)$ and
 $\ol{\MM}_{n,1}(C)$ are stacks, that is, categories fibered in
 groupoids satisfying effective descent for objects and for which
 morphisms form a sheaf.  An object $(\hat{C},\ul{z},\delta)$ of
 $\ol{\M}_{n,1}(C)$ over a scheme $S$ is a family of curves with
 sections.  Families of curves with markings and sections satisfy the
 gluing axioms for objects; similarly morphisms are determined
 uniquely by their pull-back under a covering.  Standard results on
 hom-schemes imply that the diagonal for $\ol{\MM}_{n,1}(C)$, hence
 also $\ol{\M}_{n,1}(C)$, is representable, see for example
 \cite[1.11]{dm:irr} for similar arguments, hence the stacks
 $\ol{\MM}_{n,1}(C)$ and $\ol{\M}_{n,1}(C)$ are algebraic.

 In preparation for showing that $\ol{\M}_{n,1}(C)$ is a variety we
 claim that for any object $(\hat{C},\ul{z},\delta)$ of the moduli
 stack $\ol{\M}_{n,1}(C)$ the automorphism group is trivial.  Let
 $\Gamma$ be the combinatorial type.  The association of $\Gamma$ to
 $(\hat{C},\ul{z},\delta)$ is functorial and any automorphism of
 $(\hat{C},\ul{z},\delta)$ induces an automorphism of $\Gamma$.  
 Since the graph $\Gamma$ is a tree with labelled semi-infinite edges,
 each vertex is determined uniquely by the partition of semi-infinite
 edges given by removing the vertex.  Hence the automorphism acts trivially
 on the vertices of $\Gamma$.  Each component has at least three
 special points, or two special points and a non-trivial scaling and
 so has trivial automorphism group fixing the special points.  Thus
 the automorphism is trivial on each component of $\hat{C}$.  The
 claim follows.

 The moduli space of stable scaled curves has a canonical covering by
 varieties corresponding to the versal deformations of prestable
 curves constructed by gluing.  Suppose that $(u: \hat{C} \to C,
 \ul{z}, \delta)$ is an object of $\ol{\M}_{n,1}(C)$ of combinatorial
 type $\Gamma$.  
 Let $\rho: \ol{\M}_{n,1}(C) \to \ol{\M}_{0,1}(C) \cong \P$ denote the
 forgetful morphism. 
 The locus $\rho^{-1}(\C) \subset \ol{\M}_{n,1}(C)$ of
 curves with finite scaling is isomorphic to $\ol{\M}_n(C) \times \C$,
 where the last factor denotes the scaling.  In the case that the
 root component has infinite scaling, let
 $\Gamma_1,\ldots,\Gamma_k$ denote the (possibly empty) combinatorial
 types of the bubble trees attached at the special points.  The
 stratum ${\M}_{n,1,\Gamma}(C)$ is the product of $C^k$ with moduli
 stacks of scaled affine curves ${\M}_{n_i,1,\Gamma_i}(\bA)$ for $i
 =1,\ldots, k$, each isomorphic to an affine space given by the number
 of markings and scalings minus the dimension of the automorphism
 group $(n_i + 1) + 1 - \dim(\Aut(\P^1)) = n_i - 1$ \cite{mau:mult}.
 Let
$$\gamma_e \in T^\dual_{w(e)} \hat{C}_{i_-(e)} \otimes T^\dual_{w(e)}
\hat{C}_{i_+(e)}, \quad e \in \Edge_{< \infty}(\Gamma)$$
be the deformation parameters for the nodes.  A collection of
deformation parameters $ \ul{\gamma} = ( \gamma_e)_{e \in
  \Edge(\Gamma)}$ is {\em balanced} if the signed product
\begin{equation} \label{balanced} \prod_{e \in P} \gamma_e^{\pm 1} \end{equation} 
of parameters corresponding to any non-self-crossing path $P$ between
transition components is equal to $1$, where the sign is positive for
edges pointing towards the root vertex and equal to $-1$ if the edge
is oriented away from it.  Let $Z_\Gamma$ denote the set of
deformation parameters satisfying the condition \eqref{balanced}.
Then there is a morphism
$$ \M_{n,1,\Gamma}(C) \times Z_\Gamma \to \ol{\M}_{n,1}(C) $$
described as follows.  Choose local \'etale coordinates $z_e^\pm$ on
the adjacent components to each node $w_e, \in \Edge_{<
  \infty}(\Gamma)$ and glue together the components using the
identifications $z_e^+ \mapsto \gamma_e/ z_e^-$, see for example
\cite[p. 176]{ar:alg2}, \cite[2.2]{ol:logtwist}.  Set the scaling on
the root component
$$ \delta = \prod_{e \in P} \gamma_e $$
where $P$ is a path of nodes from the root component to the transition
component, independent of the choice of component by \eqref{balanced}.
This gives a family $(\hat{C},u,\delta,\ul{z})$ of stable scaled
curves over $\M_{n,1,\Gamma}(C) \times Z_\Gamma$ and hence a morphism
to $\ol{\M}_{n,1}(C)$.  The family $(\hat{C},\ul{z},u,\delta)$ defines
a universal deformation of any curve of type $\Gamma$.  Indeed,
$(\hat{C},\ul{z})$ is a versal deformation of any of its prestable
fibers by \cite{ar:alg2}, and it follows that the family
$(\hat{C},\ul{z},u)$ is a versal deformation of any of its fibers
since there is a unique extension of the stable map on the central
fiber, up to automorphism.  The equation \eqref{product} implies that
any family of stable scaled curves satisfies the balanced relation
\eqref{balanced} between the deformation parameters for any family of
marked curves with scalings.  This provides a cover of
$\ol{\M}_{n,1}(C)$ by varieties.  It follows that $\ol{\M}_{n,1}(C)$
is a variety.

The stack of prestable scaled curves $\ol{\MM}_{n,1}(C)$ is an Artin
stack of locally finite type.  Charts for the stack
$\ol{\MM}_{n,1}(C)$, as in the case of prestable curves in
\cite{be:gw}, are given by using forgetful morphisms
$\ol{\M}_{n+k,1}(C) \to \ol{\MM}_{n,1}(C)$.  Since these morphisms
admit sections locally, they provide a smooth covering of
$\ol{\MM}_{n,1}(C)$ by varieties.

We check the valuative criterion for properness for
$\ol{\M}_{n,1}(C)$.  Given a family of stable scaled marked curves
over a punctured curve $S$ with finite scaling $\delta$
  $$(\hat{C}, u: \hat{C} \to C, \ul{z},\delta) \to S = \ol{S} - \{
\infty \} $$
we wish to construct there exists an extension over $\ol{S}$.  We
consider only the case $\hat{C} \cong C \times S$; the general case is
similar.  After forgetting the scaling $\delta$ and stabilizing we
obtain a family of stable maps to $C$ of degree $[C]$,
$$ (\hat{C}^{\st}, u: \hat{C}^{\st} \to C, \ul{z}^{\st}) \to 
\ol{S} - \{ \infty \} .$$
By properness of the stack $\ol{\M}_n(C)$ of stable maps to $C$, this
family extends over the central fiber $\infty$ to give a family over
$\ol{S}$.  The section $\delta$ of $\omega_{\hat{C}^{\st}/C}$ defines
an extension over $\ol{S}$ except possibly at the nodes.  Here there
are possible irremovable singularities corresponding to the following
situation: suppose that $\hat{C}_0,\ldots \hat{C}_i$ is a chain of
components in the curve at the central fiber, with $\hat{C}_0 \cong C$
the root component.  Suppose that $\hat{C}_{i}, \hat{C}_{i+1}$
are adjacent component with $\delta$ infinite on $\hat{C}_i$ and zero
on $\hat{C}_{i+1}$.  Taking the closure of the graph of $\delta$ gives
a family $\hat{C}$ of curves over $C$ given by replacing some of the
nodes of $\hat{C}^{\st}$ with fibers of $\P(\omega_{\hat{C}^{\st}/C}
\oplus \mO_{\hat{C}^{\st}})$ over the node.  The relative cotangent
bundle of $\hat{C}$ is related to that of $\hat{C}^{\st}$ by a twist
at $D_0,D_\infty$: If $\pi: \hat{C} \to \hat{C}^{\st}$ denotes the
projection onto $\hat{C}$ then on the components of $\hat{C}$
collapsed by $\pi$ we have
$$ \omega_{\hat{C}/C\times S} = \pi^* \omega_{\hat{C}^{\st}/C} (-D_0 
-D_\infty) $$
where $D_0,D_\infty$ are the inverse images of the sections at zero
and infinity in $\P( \omega_{\hat{C}^{\st}/C} \oplus
\mO_{\hat{C}^{\st}})$.  Abusing notation $\omega_{\hat{C}_i^{\st}/C}
(-D_0 ) = \omega_{\hat{C}_i^{\st}/C}$ resp.
$\omega_{\hat{C}_i^{\st}/C} (-D_\infty ) = \omega_{\hat{C}_i^{\st}/C}$
on components $\hat{C}_i^{\st}$ contained in $D_0$ resp. $D_\infty$.
The extension of $\delta$ to a rational section of $\pi^*
\omega_{\hat{C}^{\st}/C}$ has, by definition a zero at
$\delta^{-1}(D_0)$ and a pole at $\delta^{-1}(D_\infty)$.  Hence the
extension of $\delta$ to a section of $\pi^*
\omega_{\hat{C}^{\st}/C}(-D_0 - D_\infty)$ has no zeroes at $D_0$ and
a double pole at $D_\infty$.  This implies that $\delta$ extends
uniquely as a section of $\P( \omega_{\hat{C}/C\times S} \oplus
\mO_{\hat{C}})$ to all of $\ol{S}$.

By the construction \eqref{oneform}, the extension of $\delta$
satisfies the monotonicity condition \eqref{monotone}.  Indeed suppose
that a component $\hat{C}_i$ is further away from a component
$\hat{C}_j$ in the path of components from the root component
$\hat{C}_0$.  Since all deformation parameters $\gamma_{w_{i,k}}(z)$ are
approaching zero, from \eqref{oneform}, at most one of the limits
$\delta | \hat{C}_i, \delta | \hat{C}_j$ can be finite, and
$$ \begin{cases} \delta | \hat{C}_i   \ \text{finite}   \
  \implies \delta | \hat{C}_j \ \ \text{infinite}  \\
                 \delta | \hat{C}_j   \ \text{finite}   \
				 \implies \delta | \hat{C}_i \ \ \text{zero} .
\end{cases}. $$ 
The condition \eqref{monotone} follows.
\end{proof}

\begin{remark} 
The basic divisor equivalences used in the proof of the cobordism
\eqref{fibers} are the following.  Recall the proof of associativity
of the quantum product via divisor relations in $\ol{\M}_{0,n+1}$.
The moduli stack of genus $0$ curves with $n+1$ markings
$$ \ol{\M}_{0,n+1} =\bigcup_\Gamma \M_{0,n+1,\Gamma} $$
admits a stratification into strata indexed by parenthesized
expressions in $n$ variables $a_1,\ldots,a_n$ in a commutative algebra
$A$. The basic divisor class relation implying associativity in
$\ol{\M}_{0,4}$ corresponds to an expression
$ (a_1 a_2) a_3 \sim a_1 (a_2 a_3) $
corresponding to the picture in Figure \ref{assoc}.   This relation pulls back 
to a relation 
\begin{multline} \left[ \cup_{d_1 + d_2 = d}
\cup_{ 1,2 \in I_1, 0,3 \in I_2} \ol{\M}_{0,I_1}(X,d_1) \times_X
\ol{\M}_{0,I_2}(X,d_2) \right]
\\  \sim \left[ \cup_{d_1 + d_2 = d} \cup_{ 2,3 \in I_1,
  0,1 \in I_2} \ol{\M}_{0,I_1}(X,d_1) \times_X
\ol{\M}_{0,I_2}(X,d_2) \right] \end{multline}
(where $I_1 \cup I_2$ is a partition of $\{0,\ldots, n\}$) in the
moduli stack of stable maps $\ol{\M}_{0,n+1}(X,d)$.  A standard
argument using this relation implies the associativity of the quantum
product \cite[Theorem 6]{be:gw}.

The divisor class relations imply the commutativity of the diagram
\eqref{witten} as follows.  In the moduli space of scaled curves there
is a stratification
$$ \ol{\M}_{n,1}(C) = \bigcup_\Gamma {\M}_{n,1,\Gamma}(C) $$
in which the strata correspond to parenthesized expressions in symbols
$\tau,\kappa$ and $a_1,\ldots, a_n$, such as $\tau(\kappa(a_1 a_2)
\kappa(a_3))$ etc \cite{qk1}.  In particular, there is a divisor class
relation in $\ol{\M}_{0,1}(C)$ corresponding to the relation
$ (\tau \kappa)(a) \sim \tau(\kappa(a)) $
which pulls back to a divisor class relation in $\ol{\M}_{n,1}(C)$.
The trivialization of the relative canonical sheaf defines a canonical
isomorphism
$\ol{\M}_{0,1}(C)\cong\P^1  .$
The quantum Witten formula is obtained from the equivalence between
the divisors corresponding to the configurations in Figure
\ref{config}.

\begin{figure}[ht]
\begin{picture}(0,0)%
\includegraphics{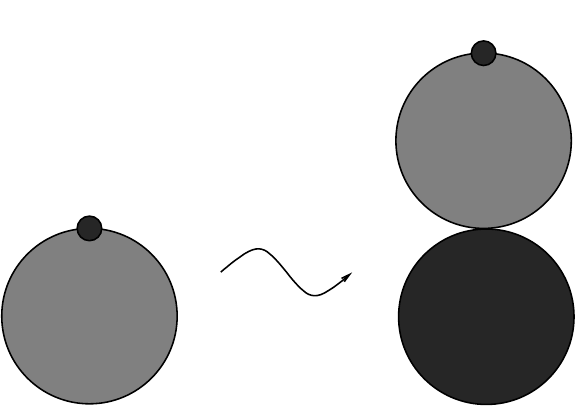}%
\end{picture}%
\setlength{\unitlength}{3947sp}%
\begingroup\makeatletter\ifx\SetFigFont\undefined%
\gdef\SetFigFont#1#2#3#4#5{%
  \reset@font\fontsize{#1}{#2pt}%
  \fontfamily{#3}\fontseries{#4}\fontshape{#5}%
  \selectfont}%
\fi\endgroup%
\begin{picture}(2763,1935)(2993,-587)
\put(3240,416){\makebox(0,0)[lb]{\smash{{\SetFigFont{8}{9.6}{\rmdefault}{\mddefault}{\updefault}{\color[rgb]{0,0,0}$z_0$}%
}}}}
\put(5202,1242){\makebox(0,0)[lb]{\smash{{\SetFigFont{8}{9.6}{\rmdefault}{\mddefault}{\updefault}{\color[rgb]{0,0,0}$z_0$}%
}}}}
\put(3221,-202){\makebox(0,0)[lb]{\smash{{\SetFigFont{8}{9.6}{\rmdefault}{\mddefault}{\updefault}{\color[rgb]{0,0,0}$\hat{C}_0$}%
}}}}
\put(5026,-215){\makebox(0,0)[lb]{\smash{{\SetFigFont{8}{9.6}{\rmdefault}{\mddefault}{\updefault}{\color[rgb]{1,1,1}$\hat{C}_0$}%
}}}}
\end{picture}%
\caption{Linear equivalence in $\ol{\M}_{1,1}(C)$} 
\label{config} 
\end{figure}

There is a similar stratification in the moduli space of scaled affine
curves which gives a homomorphism property for the quantum Kirwan map.
The stratification by combinatorial type is
$$ \ol{\M}_{n,1}(\bA) = \bigcup_\Gamma {\M}_{n,1,\Gamma}(\bA) $$
and the strata correspond to parenthesized expressions in symbols
$\kappa$ and $a_1,\ldots, a_n$ \cite{qk1}.  There is a canonical
identification 
$$\ol{\M}_{2,1}(\bA) \cong \P^1, \quad [z_1,z_2,\delta] \mapsto 
\delta(z_2 - z_1) $$
that is, the difference $z_2 - z_1$ in the affine coordinate defined
by the scaling $\delta$.  The two singular points in
$\ol{\M}_{2,1}(\bA)$ corresponding to the bubble trees in Figure
\ref{strata} give linearly equivalent divisors corresponding to an
expression
$\kappa(a_1 a_2) \sim \kappa(a_1) \kappa(a_2) .$
The divisor class relation in $\ol{\M}_{2,1}(\bA)$ pulls back to a
divisor class relation in any $\ol{\M}_{n,1}(\bA)$ implying the
homomorphism property of the linearized quantum Kirwan map \cite{qk1}.
This ends the Remark.
\end{remark} 

\begin{figure}[ht]
\begin{picture}(0,0)%
\includegraphics{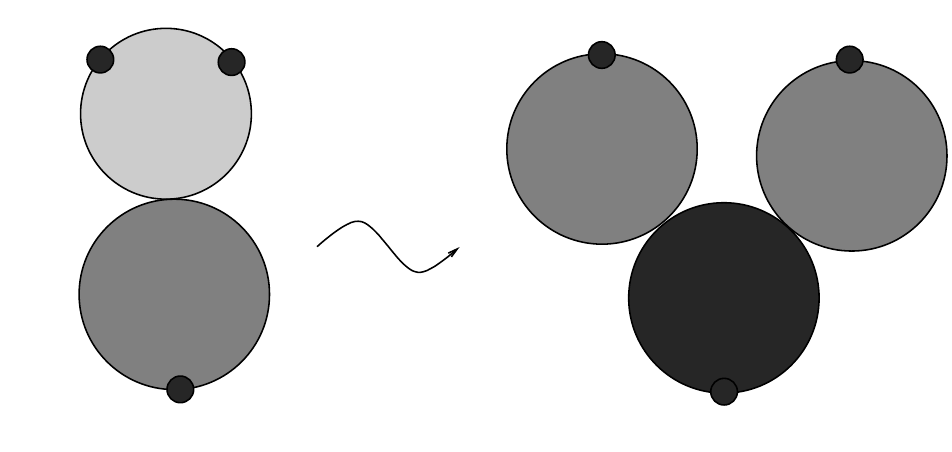}%
\end{picture}%
\setlength{\unitlength}{3947sp}%
\begingroup\makeatletter\ifx\SetFigFont\undefined%
\gdef\SetFigFont#1#2#3#4#5{%
  \reset@font\fontsize{#1}{#2pt}%
  \fontfamily{#3}\fontseries{#4}\fontshape{#5}%
  \selectfont}%
\fi\endgroup%
\begin{picture}(4553,2166)(2455,-1138)
\put(5759,-1086){\makebox(0,0)[lb]{\smash{{\SetFigFont{8}{9.6}{\rmdefault}{\mddefault}{\updefault}{\color[rgb]{0,0,0}$z_0$}%
}}}}
\put(3107,-1080){\makebox(0,0)[lb]{\smash{{\SetFigFont{8}{9.6}{\rmdefault}{\mddefault}{\updefault}{\color[rgb]{0,0,0}$z_0$}%
}}}}
\put(5183,866){\makebox(0,0)[lb]{\smash{{\SetFigFont{8}{9.6}{\rmdefault}{\mddefault}{\updefault}{\color[rgb]{0,0,0}$z_1$}%
}}}}
\put(6437,914){\makebox(0,0)[lb]{\smash{{\SetFigFont{8}{9.6}{\rmdefault}{\mddefault}{\updefault}{\color[rgb]{0,0,0}$z_2$}%
}}}}
\put(2470,859){\makebox(0,0)[lb]{\smash{{\SetFigFont{8}{9.6}{\rmdefault}{\mddefault}{\updefault}{\color[rgb]{0,0,0}$z_1$}%
}}}}
\put(3519,880){\makebox(0,0)[lb]{\smash{{\SetFigFont{8}{9.6}{\rmdefault}{\mddefault}{\updefault}{\color[rgb]{0,0,0}$z_2$}%
}}}}
\end{picture}%
\caption{Strata in affine scaled curves with two markings}
\label{strata}
\end{figure}

\section{Mumford stability}

In this section we review the relationship between the stack-theoretic
quotient and Mumford's geometric invariant theory quotient
\cite{mu:ge}.  First we introduce various Lie-theoretic notation. Let
$G$ be a connected complex reductive group with Lie algebra $\g$.
If \(G\) is a torus denote by 
$$ \g_\Z = \{ D \phi(1) \in \g \ | \ \phi \in \Hom(\C^\times,G) \},
\quad \g_\Q = \g_\Z \otimes_\Z \Q$$
the {\em coweight lattice} of derivatives of one-parameter subgroups
resp. rational one-parameter subgroups.  Dually denote by
$$ \g_\Z^\dual = \{ D \chi \in \g^\dual \ | \ \chi \in
\Hom(G,\C^\times) \}, \quad \g_\Q^\dual = \g_\Z^\dual \otimes_\Z \Q$$
the {\em weight lattice} of derivatives of characters of $G$ and the set
of {\em rational weights}, respectively.  More generally 
if $G$ is non-abelian then
we still denote by $\g_\Q$ the set of derivatives of rational
one-parameter subgroups.

The targets of our maps are quotient stacks defined as follows.  Let
$X$ be a smooth projective $G$-variety.  Let $X/G$ denote the quotient
stack, that is, the category fibered in groupoids whose fiber over a
scheme $S$ has objects pairs $v = (P,u)$ consisting of a principal
$G$-bundle $P \to S$ and a section $u: S \to P \times_G X$; and whose
morphisms are given by diagrams
$$ \begin{diagram} \node{P_1} \arrow{s} \arrow{e,t}{\phi} \node{P_2}
  \arrow{s} \\ \node{S_1}  \arrow{e,b}{\psi} \node{S_2} \end{diagram}, \quad \phi(X) \circ u_1 = u_2 \circ \psi $$
where $\phi(X) : P_1(X) \to P_2(X)$ denotes the map of associated
fiber bundles \cite{dm:irr},
\href{http://stacks.math.columbia.edu/tag/04UV}{Tag 04UV}
\cite{dejong:stacks}.

Mumford's geometric invariant theory quotient \cite{mu:ge} is
traditionally defined as the projective variety associated to the
graded ring of invariant sections of a linearization of the action in
the previous paragraph.  Let $\ti{X} \to X$ be a linearization, that
is, ample $G$-line bundle.  Then
$$ X \qu G := \Proj \left( \oplus_{k \ge 0} H^0(\ti{X}^k)^G \right)
.$$
Mumford \cite{mu:ge} realizes this projective variety as the quotient
of a {\em semistable locus} by an equivalence relation.  The
semistable locus consists of points $x \in X$ such that some tensor
power $\ti{X}^k, k > 0 $ of $\ti{X}$ has an invariant section
non-vanishing at $x$, while the unstable locus is the complement of
the semistable locus:
$$ X^{\ss} = \{ x \in X \ | \ \exists k > 0, \sigma \in
H^0(\ti{X}^k)^G, \quad \sigma(x) \neq 0 \}, 
\quad  X^{\us} := X - X^{\ss} .$$
A point $x \in X$ is {\em polystable} if its orbit is closed in the
semistable locus $\ol{Gx \cap X^{\ss}} = Gx \cap X^{\ss}$.  A point $x
\in X$ is {\em stable} if it is polystable and the stabilizer $G_x$ of
$x$ is finite.  In Mumford's definition the git quotient is the
quotient of the semistable locus by the {\em orbit equivalence
  relation}
$$ (x_1 \sim x_2) \iff \ol{Gx_1}\cap \ol{Gx_2} \cap X^{\ss} \neq
\emptyset. $$
Each semistable point is then orbit-equivalent to a unique polystable
point.  However, here we define the git quotient as the
stack-theoretic quotient
$$ X \qu G := X^{\ss}/G.$$ 
We shall always assume that $X^{\ss}/G$ is a Deligne-Mumford stack
(that is, the stabilizers $G_x$ are finite) in which case the coarse
moduli space of $X^{\ss}/G$ is the git quotient in Mumford's sense.
The Luna slice theorem \cite{luna:slice} implies that $X^{\ss}/G$ is
\'etale-locally the quotient of a smooth variety by a finite group,
and so has finite diagonal. By the Keel-Mori theorem \cite{km:quot},
explicitly stated in \cite[Theorem 1.1]{conrad:kl}, the morphism from
$X^{\ss}/G$ to its coarse moduli space is proper.  Since the coarse
moduli space of $X^{\ss}/G$ is projective by Mumford's construction,
it is proper, hence $X^{\ss}/G$ is proper as well.

Later we will need the following observation about the unstable locus.
As the quotient $X \qu G$ is non-empty, there exists an ample divisor
$D$ containing the unstable locus: take $D$ to be the vanishing locus
of any non-zero invariant section of $\ti{X}^k$ for some $k > 0$:
\begin{equation} \label{invsection} D = \sigma^{-1}(0), \quad \sigma \in H^0(\ti{X}^k)^G - \{ 0 \} .\end{equation} 

The Hilbert-Mumford numerical criterion \cite[Chapter 2]{mu:ge}
provides a computational tool to determine the semistable locus: A
point $x \in X$ is $G$-semistable if and only if it is
$\C^\times$-semistable for all one-parameter subgroups $\C^\times \to
G$.  Given an element $\lambda\in \g_\Z$ denote the
corresponding one-parameter subgroup $\C^\times \to G, \ z \mapsto
z^\lambda$.  Denote by
$$ x_\lambda := \lim_{z \to 0}  z^\lambda x $$
the limit under the one-parameter subgroup.  Let $\mu(x,\lambda) \in
\Z$ be the weight of the linearization $\ti{X}$ at $x_\lambda$ defined
by
$$ z \ti{x} = z^{\mu(x,\lambda)} \ti{x}, \quad \forall z \in
\C^\times, \ti{x} \in \ti{X}_{x_\lambda} .$$
By restricting to the case of a projective line one sees that the
point $x \in X$ is semistable if and only if $ \mu(x,\lambda) \leq 0$
for all $\lambda \in \g_\Z.$ Polystability is equivalent to
semistability and the additional condition $ \mu(x,\lambda) = 0 \iff
\mu(x,-\lambda) =0 .$ Stability is the condition that $ \mu(x,\lambda)
< 0$ for all $\lambda \in \g_\Z - \{ 0 \} .$

The Hilbert-Mumford numerical criterion can be applied
explicitly to actions on projective spaces as follows.
Suppose that $G$ is a torus and $X = \P(V)$ the projectivization of a
vector space $V$.  Let
$\ti{X} = \mO_X(1) \otimes \C_\theta $ 
be the $G$-equivariant line bundle given by tensoring the hyperplane
bundle $\mO_X(1)$ and the one-dimensional representation $\C_\theta$
corresponding to some weight $\theta \in \g_\Z^\dual$.
Recall if $p \in X$ is represented by a line $l \subset V$
then the fiber of $\mO_X(1) \otimes \C_\theta$ at $p$ is
$l^\dual\otimes \C_\theta$. 
In particular if $z^\lambda$ fixes $p$ then $z^\lambda$ scales $l$ by
some $z^{\mu(\lambda)}$ so that $z^\lambda \ti{x} =
z^{-\mu(\lambda)+\theta(\lambda)}\ti{x}$, for $\ti{x}\in l^\dual
\otimes \C_\theta$. Let $k = \dim(V)$ and decompose $V$ into weight
spaces $V_1,\ldots, V_k$ with weights $\mu_1,\dots,\mu_k\in
\g_\Z^\dual .$  Identify
$$H^2_G(X) \cong H^2_{\C^\times \times G}(V) \cong \Z \oplus
\g_\Z^\dual $$
Under this splitting the first Chern class $c_1^G(\ti{X})$ becomes
identified up to positive scalar multiple with the pair
 \begin{equation}\label{eqiv c_1}
 c_1^G(\ti{X}) \mapsto (1,\theta) \in \Z \oplus \g_\Z^\dual.
 \end{equation}
The following is essentially \cite[Proposition 2.3]{mu:ge}.

\begin{lemma} The semistable locus for the action of a torus $G$  on the projective space
$X =P(V)$ with weights $\mu_1,\ldots, \mu_k$ and linearization shifted
  by $\theta$ is $ X^{\ss} = \P(V)^{\ss} = \{ [x_1,\ldots,x_k] \in
  \P(V) \ | \ \on{hull} ( \{ \mu_i | x_i \neq 0 \}) \ni \theta \} .$ A
  point $x $ is polystable iff $\theta$ lies in the interior of the
  hull above, and stable if in addition the hull is of maximal
  dimension.
\end{lemma}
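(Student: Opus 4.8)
The plan is to apply the Hilbert--Mumford numerical criterion recalled above to reduce semistability to a family of linear inequalities indexed by one-parameter subgroups, and then to recognize this family, via the separating-hyperplane theorem, as the statement that $\theta$ lies in a convex hull. Fix $x=[x_1,\ldots,x_k]\in\P(V)$ with support $S=\{\,i\mid x_i\neq 0\,\}$, and take $\lambda\in\g_\Z$. First I would compute the limit $x_\lambda=\lim_{z\to0}z^\lambda x$: writing $x$ as the class of $\sum_{i\in S}x_i$ with $x_i\in V_i$, the surviving components are those with the smallest exponent, so $x_\lambda$ is the class of $\sum_{i\in S,\ \mu_i(\lambda)=m}x_i$ with $m=\min_{i\in S}\mu_i(\lambda)$. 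On the representing line $l$ the subgroup $z^\lambda$ acts with weight $m$, so by the description of the fiber $l^\dual\otimes\C_\theta$ recalled above the weight of $\ti X$ at $x_\lambda$ is $\mu(x,\lambda)=\theta(\lambda)-m$. Unwinding the sign conventions, the numerical criterion then identifies $X^{\ss}$ with the locus where
\[
\min_{i\in S}\mu_i(\lambda)\ \le\ \theta(\lambda)\qquad\text{for every }\lambda\in\g_\Z.
\]

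Because the $\mu_i$ and $\theta$ are rational and the inequality is homogeneous in $\lambda$, it holds for all $\lambda\in\g_\Z$ if and only if it holds for all $\lambda\in\g_\Q$, and hence, by density, for all real coweights. The heart of the matter is the equivalence
\[
\bigl(\ \forall\lambda:\ \min_{i\in S}\mu_i(\lambda)\le\theta(\lambda)\ \bigr)\ \Longleftrightarrow\ \theta\in\on{hull}(\{\mu_i\mid i\in S\}).
\]
If $\theta$ lies outside the compact convex hull, the separating-hyperplane theorem produces a coweight $\lambda$ with $\theta(\lambda)<\mu_i(\lambda)$ for all $i\in S$, whence $\theta(\lambda)<\min_{i\in S}\mu_i(\lambda)$ and the inequality fails. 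Conversely, if $\theta=\sum_{i\in S}t_i\mu_i$ with $t_i\ge0$ and $\sum_i t_i=1$, then $\theta(\lambda)=\sum_i t_i\mu_i(\lambda)\ge\min_{i\in S}\mu_i(\lambda)$ for every $\lambda$. Applying this to $-\lambda$ simultaneously yields the matching upper bound $\theta(\lambda)\le\max_{i\in S}\mu_i(\lambda)$, so the single family of inequalities is exactly hull membership. This gives the description of $X^{\ss}$.

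For the finer notions I would analyze the equality cases. Polystability adds to semistability the requirement that $\mu(x,\lambda)=0$ force $\mu(x,-\lambda)=0$. Now $\mu(x,\lambda)=0$ reads $\theta(\lambda)=\min_{i\in S}\mu_i(\lambda)$, exhibiting $\lambda$ as the inward normal of a hyperplane supporting the hull through $\theta$, while the extra vanishing $\mu(x,-\lambda)=0$ forces $\theta(\lambda)=\max_{i\in S}\mu_i(\lambda)$ as well; together these make all the values $\mu_i(\lambda)$, $i\in S$, coincide, so that the supporting hyperplane contains the whole hull. Thus the only supporting hyperplanes through $\theta$ are the trivial ones, which is precisely the statement that $\theta$ lies in the relative interior of the hull (its interior within its affine span). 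For stability the criterion instead demands the strict inequality $\min_{i\in S}\mu_i(\lambda)<\theta(\lambda)$ for every $\lambda\neq0$. A nonzero $\lambda$ that is constant on $\{\mu_i\mid i\in S\}$ would give $\theta(\lambda)=\min_{i\in S}\mu_i(\lambda)$, violating strictness; such a $\lambda$ exists exactly when the affine span of the $\mu_i$ is proper, i.e. when the hull is not of maximal dimension. Hence strictness for all $\lambda\neq0$ holds precisely when the hull is full-dimensional and $\theta$ is an interior point, which is the stable statement.

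The step I expect to be the main obstacle is this convex-geometric dictionary: keeping the three regimes --- hull membership, relative interior, and interior together with full dimension --- cleanly separated, and carefully justifying the passage from integral to real coweights, since the numerical criterion is literally a statement about $\lambda\in\g_\Z$ whereas the separating-hyperplane argument lives over $\R$.
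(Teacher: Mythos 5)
Your proposal is correct and follows the same overall strategy as the paper's proof: compute the Hilbert--Mumford weight at the limit point $x_\lambda$, reduce semistability to a family of linear inequalities indexed by coweights, identify that family with hull membership, and then analyze the equality cases for polystability and stability. A few differences are worth recording. First, the paper simply asserts the equivalence between the inequalities $\nu(x,\lambda) \le (-\theta,\lambda)$ for all $\lambda$ and the condition $\theta \in \on{hull}\{\mu_i \mid x_i \neq 0\}$ (``That is,\ldots''), whereas you supply the missing content: the separating-hyperplane argument for the ``only if'' direction, the convex-combination computation for ``if,'' and the passage from $\g_\Z$ to real coweights by homogeneity and density -- the step you rightly flag as the one needing care. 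Second, your ``unwinding the sign conventions'' is doing real work: as literally stated, the paper's criterion ($\mu(x,\lambda)\le 0$, with $\mu$ the weight on the fiber of the ample linearization at the $z \to 0$ limit) has the wrong sign, and the paper's own computation contains the compensating slip (its $\nu(x,\lambda) = \min_i\{-\mu_i(\lambda)\}$ makes $x_\lambda$ retain the \emph{maximum}-weight coordinates, which is the $z \to \infty$ limit, not $z \to 0$); your limit computation, keeping the minimum-weight coordinates, is the correct one, and since the condition is quantified over all $\lambda$ and hence symmetric under $\lambda \mapsto -\lambda$, both routes land on the same (correct) hull characterization. Third, for the stability clause the two arguments genuinely diverge: the paper identifies the Lie algebra of the stabilizer $G_x$ with the annihilator of the span of $\{\mu_i \mid x_i \neq 0\}$ and invokes finiteness of the stabilizer, which makes the geometric meaning transparent, while you stay entirely within the numerical criterion and characterize when strictness $\min_{i}\mu_i(\lambda) < \theta(\lambda)$ can fail for some $\lambda \neq 0$; both are valid, and yours has the advantage of using only the criterion as stated. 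Finally, your explicit distinction between relative interior (polystability) and interior together with full dimension (stability) agrees with the paper's proof, which also derives the \emph{relative} interior condition even though the lemma's statement says ``interior''; your parenthetical making this precise is an improvement rather than a discrepancy.
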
 

\begin{proof}  The Hilbert-Mumford weights are computed as follows.  For
any non-zero $\lambda \in \g_\Z$, let
$$\nu(x,\lambda) := \min_i \left\{ - \mu_i(\lambda), x_i \neq 0 \right\} .$$
Then 
\begin{eqnarray*}
 z^\lambda [ x_1,\ldots, x_k ] &=& [ z^{\mu_1(\lambda)} x_1,\ldots, z^{\mu_k(\lambda)}
  x_k] \\
&=&  [ z^{\mu_1(\lambda) + \nu(x,\lambda)} x_1,\ldots,
  z^{\mu_k(\lambda) + \nu(x,\lambda)} x_k ] \end{eqnarray*}
and 
$$ (-\mu_i(\lambda) \neq \nu(x,\lambda)) \ \implies \ \left(\lim_{z \to
  0} z^{\mu_i(\lambda) + \nu(x,\lambda)} = 0 \right) .$$
Let
$$ x_\lambda := \lim_{z \to 0} z^{\lambda} x = \lim_{z \to 0}
[z^{\mu_i(\lambda)} x_i ]_{i =1}^k \in X $$
Then
$$ x_\lambda = [x_{\lambda,1},\ldots, x_{\lambda,k}],\quad
x_{\lambda,i} = \begin{cases} x_i & -\mu_i(\lambda) = \nu(x,\lambda)
  \\ 0 & \text{otherwise} \end{cases} .$$
The Hilbert-Mumford weight is therefore
\begin{equation} \label{hm} 
\mu(x,\lambda) = \nu(x,\lambda) + (\theta,\lambda) .\end{equation} 
By the Hilbert-Mumford criterion, the point $x$ is semistable if and
only if
$$ \nu(x,\lambda) := \min \{ - \mu_i(\lambda) \ | \ x_i \neq 0\} \leq
(- \theta,\lambda), \quad \forall \lambda \in \g_\Z - \{ 0 \} .$$
That is,
$$  (x \in X^{\ss}) \iff ( \theta \in \on{hull} 
\{ \mu_i \ |  \  x_i \neq 0\} ) .$$
This proves the claim about the semistable locus.  To prove the claim
about polystability, note that $\mu(x,\lambda) = 0 = \mu(x,-\lambda)$
implies that the minimum $\nu(x,\lambda)$ is also the maximum.  Thus
the only affine linear functions $\xi: \g^\dual \to \R$ which vanish
at $\theta$ are those $\xi$ that are constant on the hull of $\mu_i$
with $x_i$ nonzero.  This implies that the span of $\mu_i$ with $x_i$
non-zero contains $\theta$ in its relative interior.  The stabilizer
$G_x$ of $x$ has Lie algebra $\g_x$ the annihilator of the span of the
hull of the $\mu_i$ with $x_i \neq 0$.  So the stabilizer $G_x$ is
finite if and only if the span of $\mu_i$ with $x_i \neq 0$ is of
maximal dimension $\dim(G)$. This implies the claim on stability.
\end{proof}

We introduce the following notation for weight and coweight lattices.
As above $G$ is a connected complex reductive group with maximal torus $T$ and
$\g,\t$ are the Lie algebras of $G,T$ respectively.  Fix an invariant
inner product $( \ , \ ):\g \times \g \to \C$ on $\g$ inducing an
identification $\g \to \g^\dual$. By taking a multiple of the basic
inner product on each factor we may assume that the inner product
induces an identification $\t_\Q \to \t_\Q^\dual$.  Denote by
$$\Vert \cdot \Vert: \q_\Q \to \R_{\ge 0}, \quad \Vert \xi \Vert : =
(\xi , \xi)^{1/2}$$
the norm with respect to the induced metric.

Next recall the theory of Levi decompositions of parabolic subgroups
from Borel \cite[Section 11]{bo:lag}.  A parabolic subgroup $Q$ of $G$
is one for which $G/Q$ is complete, or equivalently, containing a
maximal solvable subgroup $B \subset G$.  Any parabolic $Q$ admits a
Levi decomposition $ Q = L(Q) U(Q) $ where $L(Q)$ denote a maximal
reductive subgroup of $Q$ and $U(Q)$ is the maximal unipotent
subgroup.  Let $\l(Q),\u(Q)$ denote the Lie algebras of $L(Q), U(Q)$.
Let $\g = \t \oplus \bigoplus_{ \alpha \in R(G)} \g_\alpha$ denote the
root space decomposition of $\g$, where $R(G)$ is the set of roots.
The Lie algebras $\l(Q),\u(Q)$ decompose into root spaces as
$$ \q = \t \oplus \bigoplus_{\alpha \in R(Q)} \g_\alpha, \quad \l(Q) =
\t \oplus \bigoplus_{\alpha \in R(Q) \cap -R(Q)} \g_\alpha, \quad
\u(Q) = \q/\l(Q) $$
where $R(Q) \subset R(G)$ is the set of roots for $\l(Q)$.  Let
$\z(Q)$ denote the center of $\l(Q)$ and
$$\z_+(Q) = \{ \xi \in \z(Q) \ | \ \alpha(\xi) \ge 0, \ \forall \alpha \in R(Q) \} $$ 
the {\em positive chamber} on which the roots of $Q$ are non-negative.
The Levi decomposition induces a homomorphism
\begin{equation} \label{piq} \pi_Q: Q \to Q/U(Q) \cong L(Q) .\end{equation}
This homomorphism has the following alternative description as a
limit.  Let $\lambda \in \z_+(Q) \cap \g_\Q$ be a positive coweight
and
$$ \phi_\lambda: \C^\times \to L(Q), \quad z \mapsto z^\lambda $$
the corresponding central one-parameter subgroup.  Then
$$ \pi_Q(g) = \lim_{z \to 0} \Ad(z^\lambda) g
.$$
In the case of the general linear group in which the parabolic
consists of block-upper-triangular matrices, this limit projects out
the off-block-diagonal terms.

The unstable locus admits a stratification by maximally destabilizing
subgroups, as in Hesselink \cite{hess:strat}, Kirwan \cite{ki:coh},
and Ness \cite{ne:st}.  The stratification reads
\begin{equation} \label{kn} X = \bigcup_{\lambda \in \cC(X)} X_\lambda, \quad X_\lambda = G
\times_{Q_\lambda} Y_\lambda, \quad Y_\lambda \mapsto Z_\lambda
\ \text{affine fibers} \end{equation}
where $Y_\lambda,Z_\lambda,Q_\lambda,\cC(X)$ are defined as
follows. For each fixed point component $Z_\lambda$ of $z^{\lambda}$
there exist a weight $\mu(\lambda)$ so $z^{\lambda}$ acts on $\ti{X} |
Z_\lambda$ with weight $\mu(\lambda)$:
$$ z^\lambda \ti{x} = z^{\mu(\lambda)} \ti{x}, \quad \forall \ti{x}
\in \ti{X} | Z_\lambda .$$
Define
\begin{equation} \label{CX} \cC(X) = \{ \lambda \in \t_+ \ | \ \exists Z_\lambda,
\ \mu(\lambda) = (\lambda, \lambda) \} \end{equation}
using the metric, where $\t_+$ is the closed positive Weyl chamber.
The variety $Y_\lambda$ is the set of points that flow to $Z_\lambda$
under $z^{\lambda}, z \to 0$:
$$ Y_\lambda = \left\{  x \in X  \ | \ \lim_{z \to 0} z^\lambda x \in Z_\lambda \right\} $$
The group $Q_\lambda$ is the parabolic of group elements that have a
limit under $\Ad(z^{\lambda})$ as $z \to 0$:
$$ Q_\lambda = \left\{ g \in G \ | \ \exists \lim_{z \to 0}
\Ad(z^\lambda) g \in G \right\} .$$
Then $Y_\lambda$ is a $Q_\lambda$-variety; and $X_\lambda$ is the
flow-out of $Y_\lambda$ under $G$.  By taking quotients we obtain a
stratification of the quotient stack by locally-closed substacks
$$ X / G = \bigcup_{\lambda \in \cC(X)} X_\lambda / G .$$
This stratification was used in Teleman \cite{te:qu} to give a formula
for the sheaf cohomology of bundles on the quotient stack.

\section{Kontsevich stability}

In this section we recall the definition of Kontsevich's moduli stacks
of stable maps \cite{ko:lo} as generalized to orbifold quotients by
Chen-Ruan \cite{cr:orb} and in the algebraic setting by
Abramovich-Graber-Vistoli \cite{agv:gw}.  Let $X$ be a smooth
projective variety.  Recall that a {\em prestable map} with target $X$
consists of a prestable curve $C \to S$, a morphism $u: C \to X$, and
a collection $z_1,\ldots,z_n : S \to C$ of distinct non-singular
points called {\em markings}.  An automorphism of a prestable map
$(C,u,\ul{z})$ is an automorphism
$$\varphi:C \to C, \quad \varphi \circ u = u, \quad \varphi(z_i) = z_i, \quad i = 1,\ldots, n .$$
A prestable map $(C,u,\ul{z})$ is {\em stable} if the number $\#
\Aut(C,u,\ul{z})$ of automorphisms is finite.  For $d \in H_2(X,\Z)$
we denote by $\ol{\M}_{g,n}(X,d)$ \label{mgn} the moduli stack of
stable maps $(C,u,\ul{z})$ of genus $g = \on{genus}(C)$ and class $d =
v_*[C]$ with $n$ markings.

The notion of stable map generalizes to orbifolds \cite{cr:orb},
\cite{agv:gw} as follows.  These definitions are needed for the
construction of the moduli stack of affine gauged maps in the case
that the git quotient is an orbifold, but not if the quotient is free.
First we recall the notion of twisted curve:

\begin{definition} \label{twistedcurve} {\rm (Twisted curves)} 
Let $S$ be a scheme.  An {\em $n$-marked twisted curve} over $S$ is a
collection of data $(f: \cC \to S, \{ {\mathcal z}_i \subset \cC
\}_{i=1}^n)$ such that
\begin{enumerate} 
\item {\rm (Coarse moduli space)} $\cC$ is a proper stack over $S$
  whose geometric fibers are connected of dimension $1$, and such that
  the coarse moduli space of $\cC$ is a nodal curve $C$ over $S$.
\item {\rm (Markings)} The ${\mathcal z}_i \subset \cC$ are closed
  substacks that are gerbes over $S$, and whose images in $C$ are
  contained in the smooth locus of the morphism $C \to S$.
\item {\rm (Automorphisms only at markings and nodes)} If $\cC^{ns}
  \subset \cC$ denotes the {\em non-special locus} given as the
  complement of the ${\mathcal z}_i$ and the singular locus of $\cC
  \to S$, then $\cC^{ns} \to C$ is an open immersion.
\item {\rm (Local form at smooth points)} If $p \to C$ is a geometric
  point mapping to a smooth point of $C$, then there exists an integer
  $r$, equal to $1$ unless $p$ is in the image of some ${\mathcal
    z}_i$, an \'etale neighborhood $\Spec(R) \to C$ of $p$ and an
  \'etale morphism $\Spec(R) \to \Spec_S(\mO_S[x])$ such that the
  pull-back $\cC \times_C \Spec(R)$ is isomorphic to $ \Spec(R[z]/z^r
  = x )/\mu_r .$
\item {\rm (Local form at nodal points)} If $p \to C$ is a geometric
  point mapping to a node of $C$, then there exists an integer $r$, an
  \'etale neighborhood $\Spec(R) \to C$ of $p$ and an \'etale morphism
  $\Spec(R) \to \Spec_S(\mO_S[x,y]/(xy - t))$ for some $t \in \mO_S$
  such that the pull-back $\cC \times_C \Spec(R)$ is isomorphic to $
  \Spec(R[z,w]/zw = t', z^r = x, w^r = y )/\mu_r $ for some $t' \in
  \mO_S$.
\end{enumerate}
\end{definition} 

Next we recall the notion of twisted stable maps.  Let $\XX$ be a
proper Deligne-Mumford stack with projective coarse moduli space $X$.
Algebraic definitions of twisted curve and twisted stable map to a
$\XX$ are given in Abramovich-Graber-Vistoli \cite{agv:gw},
Abramovich-Olsson-Vistoli \cite{aov:twisted}, and Olsson
\cite{ol:logtwist}.

\begin{definition} 
A {\em twisted stable map} from an $n$-marked twisted curve $(\pi :
\cC \to S, ( {\mathcal z}_i \subset \cC )_{i=1}^n )$ over $S$ to $\XX$
is a representable morphism of $S$-stacks $ u: \cC \to \XX $ such that
the induced morphism on coarse moduli spaces $ u_c: C \to X $ is a
stable map in the sense of Kontsevich from the $n$-pointed curve $(C,
\ul{z} = (z_1,\ldots, z_n ))$ to $X$, where $z_i$ is the image of
${\mathcal z}_i$.  The {\em homology class} of a twisted stable curve
is the homology class $u_* [ \cC_s] \in H_2(X,\Q)$ of any fiber
$\cC_s$.
\end{definition} 

\noindent Twisted stable maps naturally form a $2$-category.  Every $2$-morphism
is unique and invertible if it exists, and so this $2$-category is
naturally equivalent to a $1$-category which forms a stack over
schemes \cite{agv:gw}.

\begin{theorem} (\cite[4.2]{agv:gw}) The stack $\ol{\M}_{g,n}(\XX)$ of
  twisted stable maps from $n$-pointed genus $g$ curves into $\XX$ is
  a Deligne-Mumford stack.  If $\XX$ is proper, then for any $c > 0$
  the union of substacks $\ol{\M}_{g,n}(\XX,d)$ with homology class $d
  \in H_2(\XX,\Q)$ satisfying $(d, c_1(\ti{X}))< c$ is proper.
\end{theorem}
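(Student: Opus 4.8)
The plan is to establish the two assertions in sequence: first that $\ol{\M}_{g,n}(\XX)$ is an algebraic stack with finite inertia (hence Deligne--Mumford), and then that the degree-bounded union is proper via the valuative criterion. The guiding principle throughout is to \emph{reduce to the coarse moduli space} $X$, where Kontsevich's theory of stable maps $\ol{\M}_{g,n}(X,d)$ is already available, and to control separately the extra twisting data --- the cyclotomic gerbe structures $\mu_r$ at nodes and markings --- which by construction is of finite type with bounded automorphisms.

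For the Deligne--Mumford property, first I would verify algebraicity: the fibered category is a stack for the \'etale topology by effective descent for twisted curves together with representable morphisms, and its diagonal is representable because morphisms between twisted stable maps are governed by Hom-schemes between the underlying curves, as in the untwisted case. To see that it is Deligne--Mumford I would show that the automorphism group of any object is finite. An automorphism of a twisted stable map $(u:\cC\to\XX,\ul{z})$ induces an automorphism of the coarse stable map $u_c:C\to X$; the latter group is finite by Kontsevich stability of $u_c$. The automorphisms inducing the identity on the coarse curve act through the local $\mu_r$-structures at the special points of $\cC$, and so form a finite abelian group. Finiteness of the extension gives finite, reduced automorphism groups, i.e.\ an unramified diagonal, which is exactly the Deligne--Mumford condition.

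For properness of the locus where $(d,c_1(\ti{X}))<c$, I would check finite type, separatedness, and universal closedness. Finite type follows from boundedness: the bound on $(d,c_1(\ti{X}))$ bounds the class of the coarse stable map, so it lies in finitely many components of $\ol{\M}_{g,n}(X,d)$, and the twisting orders $r$ are bounded because $\XX$ is a fixed proper Deligne--Mumford stack. For separatedness and universal closedness I would apply the valuative criterion over a discrete valuation ring $R$ with fraction field $K$. Given a twisted stable map over $\Spec K$, passing to coarse spaces yields a stable map to $X$, which extends uniquely over $\Spec R$ by properness of $\ol{\M}_{g,n}(X,d)$ and by properness of the coarse space of $\XX$. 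The remaining task is to lift this coarse extension to a twisted stable map into $\XX$, installing the correct cyclotomic structure on the central fiber and extending the representable morphism across the (possibly new) nodes that appear in the limit.

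The hard part is precisely this lifting step in the valuative criterion. The limiting coarse curve may acquire new nodes, and one must equip the central fiber with a balanced twisted-curve structure so that the morphism to $\XX$ remains representable and extends over those nodes. After a finite (ramified) base change of $R$ one obtains such a lift, using the tameness of $\XX$ and the root-stack description of twisted curves; uniqueness up to a unique isomorphism then follows from the separatedness of $\XX$ together with the rigidity of representable morphisms to a Deligne--Mumford stack. Combining finite type, separatedness, and universal closedness yields properness of the bounded locus, completing the argument.
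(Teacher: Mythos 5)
You should first note that the paper offers no proof of this statement at all: it is quoted verbatim with a citation to \cite[4.2]{agv:gw}, and the actual argument lives in Abramovich--Vistoli's \emph{Compactifying the space of stable maps} (\cite{abramovich:compactifying}). So the relevant comparison is with that proof, and your sketch does follow its broad architecture (coarse reduction, finite automorphisms for the Deligne--Mumford property, valuative criterion with a lifting step). The Deligne--Mumford half of your argument is essentially fine in the characteristic-zero setting relevant here, where finite automorphism groups give an unramified diagonal.

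There is, however, a genuine gap at the step you yourself flag as hard, and it is not where you locate it. You present the lifting problem as if the only difficulty were installing cyclotomic structures at the (possibly new) nodes, to be dispatched by ``tameness and the root-stack description.'' But in the Abramovich--Vistoli proof the crux comes \emph{before} any node is touched: given the coarse extension $\bar{f}:\bar{\cC}\to X$ over $\Spec R$, one must lift $\bar{f}$ to $\XX$ over the complement of the special points of the central fiber, a lifting problem on a two-dimensional total space. This is solved in two nontrivial steps: (i) extension across the codimension-one points (generic points of components of the central fiber), after finite base change, via the valuative criterion applied to the proper coarse moduli morphism $\XX\to X$; and (ii) extension across the remaining isolated codimension-two points, which is precisely the content of the Abramovich--Vistoli \emph{purity lemma} for maps from a normal surface with quotient singularities to a separated Deligne--Mumford stack. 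Your sketch supplies neither: root constructions produce the twisted structure at nodes and markings only \emph{once the map already exists on the complement}, and without purity there is no reason such a lift exists there at all. A secondary flaw: you justify the boundedness of the twisting indices $r$ by properness of $\XX$, but properness alone does not bound $r$; what bounds it is \emph{representability} of $u$, which forces the stabilizer $\mu_r$ at a special point of $\cC$ to inject into the automorphism group of the image point in $\XX$, and those groups have bounded order because $\XX$ is of finite type.
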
 

The Gromov-Witten invariants takes values in the cohomology of the
{\em inertia stack}
$$ \cI_\XX := \XX \times_{\XX \times \XX} \XX $$
where both maps are the diagonal.  The objects of $\cI_\XX$ may be
identified with pairs $(x,g)$ where $x \in \XX$ and $g \in
\Aut_\XX(x)$.  For example, if $\XX = X/G$ is a global quotient by a
finite group then
$$ \cI_\XX = \bigcup_{[g] \in G/\Ad(G)} X^g/Z_g $$
where $G/\Ad(G)$ denotes the set of conjugacy classes in $X$ and $Z_g$
is the centralizer of $g$.  Let $\mu_r = \Z/r\Z$ denote the group of
$r$-th roots of unity.  The inertia stack may also be written as a hom
stack \cite[Section 3]{agv:gw}
$$ \cI_\XX = \cup_{r > 0} \cI_{\XX,r}, \quad \cI_{\XX,r} :=
\Hom^{\on{rep}}(B\mu_r, \XX) . $$
The classifying stack $B\mu_r$ is a Deligne-Mumford stack and if $\XX$
is a Deligne-Mumford stack then
$$ \ol{\cI}_\XX := \cup_{r > 0} \ol{\cI}_{\XX,r}, \quad
\ol{\cI}_{\XX,r} := \cI_{\XX/r}/ B\mu_r . $$
is the {\em rigidified inertia stack} of representable morphisms from
$B \mu_r$ to $\XX$, see \cite[Section 3]{agv:gw}.  There is a
canonical quotient cover $\pi: \cI_\XX \to \ol{\cI}_\XX$ which is
$r$-fold over $\ol{\cI}_{\XX,r}$.  Since the covering is finite, from
the point of rational cohomology there is no difference between
$\ol{\cI}_{\XX}$ and $\cI_{\XX}$; that is, pullback induces an
isomorphism
$$ \pi^* : H^*(\ol{\cI}_\XX,\Q) \to H^*(\cI_\XX,\Q) .$$
For the purposes of defining orbifold Gromov-Witten invariants,
$\ol{\cI}_\XX$ can be replaced by $\cI_\XX$ at the cost of additional
factors of $r$ on the $r$-twisted sectors. If $\XX = X/G$ is a global
quotient of a scheme $X$ by a finite group $G$ then
$$ \ol{\cI}_{X/G} = \coprod_{(g)} X^{g}/ (Z_g/ \lan g \ran) $$
where $\lan g \ran \subset Z_g$ is the cyclic subgroup generated by
$g$.
For example, suppose that $X$ is a polarized linearized projective
$G$-variety such that $X \qu G$ is locally free.  Then
$$ \cI_{X \qu G} = \coprod_{(g)} X^{\ss,g}/ Z_g $$
where $X^{\ss,g}$ is the fixed point set of $g \in G$ on $X^{\ss}$,
$Z_g$ is its centralizer, and the union is over all conjugacy classes,
$$ \ol{\cI}_{X \qu G} = 
\coprod_{(g)} X^{\ss,g}/ (Z_g/ \lan g \ran)  $$
where $\lan g \ran$ is the (finite) group generated by $g$.  The
moduli stack of twisted stable maps admits evaluation maps to the
rigidified inertia stack
$$ \ev: \ol{\M}_{g,n}(\XX) \to \ol{\cI}_\XX^n, 
\quad  \ol{\ev}: \ol{\M}_{g,n}(\XX) \to \ol{\cI}_\XX^n, 
 $$
where the second is obtained by composing with the involution
$\ol{\cI}_{\XX} \to \ol{\cI}_{\XX} $ induced by the map $\mu_r \to
\mu_r, \zeta \mapsto \zeta^{-1}$.  

Constructions of Behrend-Fantechi \cite{bf:in} provide the stack of stable maps
with virtual fundamental classes.  The virtual fundamental classes
$$[\ol{\M}_{g,n,\Gamma}(\XX,d)] \in H(\ol{\M}_{g,n}(\XX),\Q) $$
(where the right-hand-side denotes the singular homology of the coarse
  moduli space) satisfy the splitting axioms for morphisms of modular
  graphs similar to those in the case that $X$ is a variety.  Orbifold
  Gromov-Witten invariants are defined by virtual integration of
  pull-back classes using the evaluation maps above.  The orbifold
  Gromov-Witten invariants satisfy properties similar to those for
  usual Gromov-Witten invariants, after replacing rescaling the inner
  product on the cohomology of the inertia stack by the order of the
  stabilizer.  The definition of orbifold Gromov-Witten invariants
  leads to the definition of orbifold quantum cohomology as follows.

  \begin{definition}\label{d:oqh} {\rm (Orbifold quantum cohomology)}
  To each component $\XX_k$ of ${\cI}_\XX$ is assigned a rational
  number $\age(\XX_k)$ as follows.  Let $(x,g)$ be an object in
  $\XX_k$.  The element $g$ acts on $T_x \XX$ with eigenvalues
  $\alpha_1, \ldots,\alpha_n$ with $ n = \dim(\XX)$.  Let $r$ be the
  order of $g$ and define $s_j \in \{ 0,\ldots, r - 1 \}$ by $\alpha_j
  = \exp( 2\pi i s_j / r)$.  The {\em age} is defined by $ \age(\XX_k)
  = (1/r) \sum_{j=1}^n s_j .$ Let 
$$ \Lambda_\XX = \left\{ \sum b_i q^{d_i} \ | \ b_i \in \Q, d_i \in
  H_2(\XX,\Q), \  
  \forall c > 0, \# \{ d_i \ | \ (d_i,c_1(\ti{X})) < c \} < \infty
  \right\} $$
  denote the Novikov field of sums of formal symbols $q^{d_i}, d_i \in
  H_2(\XX,\Q)$ where for each $c> 0$, only finitely many $q^{d_i}$
  with $(d_i,c_1(\ti{X})) < c$ have non-zero coefficient.  Denote the
  quantum cohomology
$$ QH(\XX) = \bigoplus QH^\bullet(\XX), \quad QH^\bullet(\XX) =
\bigoplus_{\XX_k \subset \cI_\XX} H^{\bullet + 2 \age(\XX_k)}(\XX_k)
\otimes \Lambda_\XX .$$
\end{definition}

\noindent The genus zero Gromov-Witten invariants define on $QH(\XX)$
the structure of a Frobenius manifold \cite{cr:orb}, \cite{agv:gw}.

\section{Mundet stability} 

In this section we explain the Ramanathan condition for semistability
of principal bundles \cite{ra:th} and its generalization to maps to
quotients stacks by Mundet \cite{mund:corr}, and the quot-scheme and
stable-map compactification of the moduli stacks.  

\subsection{Ramanathan stability} 

Morphisms from a curve to a quotient of a point by a reductive group
are by definition principal bundles over the curve.  Bundles have a
natural semistability condition introduced half a century ago by
Mumford, Narasimhan-Seshadri, Ramanathan and others in terms of {\em
  parabolic reductions} \cite{ra:th}.  First we explain stability for
vector bundles.  A vector bundle $E \to C$ of degree zero over a
smooth projective curve $C$ is semistable if there are no sub-bundles
of positive degree:
$$ (E \ \text{semistable} ) \quad \text{iff} \quad ( \deg(F) \leq 0, \quad \forall
F \subset E \ \text{sub-bundles}) .$$
A generalization of the notion of semistability to principal bundles
is given by Ramanathan \cite{ra:th} in terms of {\em parabolic
  reductions}.  A parabolic reduction of $P$ consists of a pair
$$Q \subset G, \quad \sigma : C \to P/Q$$
of a parabolic subgroup of $G$, that is and a section $\sigma: C \to
P/Q$.  Denote by $\sigma^* P \subset P$ the pull-back of the
$Q$-bundle $P \to P/Q$, that is, the reduction of structure group of
$P$ to $Q$ corresponding to $\sigma$.  Associated to the homomorphism
$\pi_Q$ of \eqref{piq} is an {\em associated graded} bundle $\Gr(P) :=
\sigma^*P \times_Q L(Q) \to C$ with structure group $L(Q)$.  In the
case that $P$ is the frame bundle of a vector bundle $E \to C$ of rank
$r$, that is,
$$ P = \cup_z P_z, \quad P_z = \{ (e_1,\ldots, e_r) \in E_z^r \ |
\ e_1 \wedge \ldots \wedge e_r \neq 0 \} $$
a parabolic reduction of $P$ is equivalent to a flag of
sub-vector-bundles of $E$
$$ \{0 \} \subset E_{i_1} \subset E_{i_2} \subset \ldots \subset
E_{i_l} \subset E. $$
Explicitly the parabolic reduction $\sigma^* P$ given by frames
adapted to the flag:
$$ \sigma(z) = \{ (e_1,\ldots,e_r) \in E_z^r \ | \ e_j \in E_{i_k,z},
\ \forall j \leq i_k, k = 1,\ldots, l \} .$$
Conversely, given a parabolic reduction the associated vector bundle
has a canonical filtration.

An analog of the degree of a sub-bundle for parabolic reductions is
the degree of a line bundle defined as follows.  Given $\lambda \in
\g_\Z - \{ 0 \}$ we obtain from the identification $\g \to \g^\dual$ a
rational weight $\lambda^\vee$.  Denote the corresponding characters
$ \chi_\lambda: L(Q) \to \C^\times$ and $ \chi_\lambda\circ \pi_Q: Q
\to \C^\times .$
Consider the associated line bundle over $C$ defined by 
$P(\C_{\lambda^\vee}) := \sigma^* P \times_Q \C_{\lambda^\vee} .$
The {\em Ramanathan weight} \cite{ra:th} is the degree of the line
bundle $P(\C_{\lambda^\vee}) $, that is,
%
$$ \mu_{BG}(\sigma,\lambda) := \lan [C],
c_1(P(\C_{\lambda^\vee}))\ran \in \Z
.$$
The bundle $P \to C$ is {\em Ramanathan semistable} if for all 
$(\sigma,\lambda)$ with $\lambda $ dominant, 
$$ \mu_{BG}(\sigma,\lambda) \leq 0 , \quad \forall (\sigma,\lambda) .$$
As in the case of vector bundles, it suffices to check semistability
for all reductions to {\em maximal parabolic} subgroups.  In fact, any
dominant weight may be used in the definition of
$\mu_{BG}(\sigma,\lambda)$, which shows that Ramanathan semistability
is independent of the choice of invariant inner product on the Lie
algebra and one obtains the definition given in Ramanathan
\cite{ra:th}.

\subsection{Mundet semistability} 

The Mundet semistability condition generalizes Ramanathan's condition
to morphisms from a curve to the quotient stack \cite{mund:corr},
\cite{schmitt:univ}.  Let
$$(p: P \to C, u: C \to P(X)) \in \on{Obj}(\Hom(C,X/G)) $$
be a gauged map.  Let $(\sigma,\lambda)$ consist of a parabolic
reduction $\sigma: C \to P/Q$ and a positive coweight $\lambda \in
\z_+(Q)$.  Consider the family of bundles $ P^\lambda \to S :=
\C^\times $ obtained by conjugating by $z^\lambda$.  That is, if $P$
is given as a cocycle in nonabelian cohomology with respect to a
covering $\{ U_i \to X \}$
$$ [P] = [\psi_{ij} :
(U_i \cap U_j) \to G] \ \in \ H^1(C,G) $$
then the twisted bundle is given by 
$$ [P^\lambda] = [ z^{\lambda} \psi_{ij}
z^{-\lambda}: (U_i \cap U_j) \to G ] \ \in \  H^1(C \times S,G) .$$
Define a family of sections
$$ u^\lambda: S \times C \to P^\lambda(X) $$
by multiplying $u$ by $z^\lambda, z \in \C^\times$.  This family has
an extension over $s = \infty$ called the {\em associated graded}
bundle and stable section
\begin{equation} \label{assocgrad} \Gr(P) \to C, \quad \Gr(u): \hat{C} \to \Gr(P)(X) \end{equation} 
whose bundle $\Gr(P)$ agrees with the definition of associated graded
above.  Note that the associated graded section $\Gr(u)$ exists by
properness of the moduli space of stable maps to $\Gr(P)(X)$.  The
composition of $\Gr(u)$ with projection $\Gr(P)(X)\to C$ is a map of
degree one; hence there is a unique component $\hat{C}_0$ of $\hat{C}$
that maps isomorphically onto $C$.  The construction above is
$\C^\times$-equivariant and in particular over the central fiber $z =
0$ the group element $z^\lambda$ acts by an automorphism of $\Gr(P)$
fixing $\Gr(u)$ up to automorphism of the domain.

For each pair of a parabolic reduction and one-parameter subgroup as
above, the Mundet weight is a sum of {\em Ramanathan} and {\em
  Hilbert-Mumford} weights.  To define the Mundet weight, consider the
action of the automorphism $z^\lambda$ on the associated graded
$\Gr(P)$.  The automorphism of $X$ by $z^\lambda$ is $L(Q)$-invariant
and so defines an automorphism of the associated line bundle $\Gr(u)^*
P(\ti{X}) \to \Gr(C)$.  The weight of the action of $z^\lambda$ on the
fiber of $\Gr(u)^* P(\ti{X})$ over the root component $\hat{C}_0$
is the {\em Hilbert-Mumford weight}
$$ \mu_X (\sigma,\lambda) \in \Z, \quad z^\lambda \ti{x} =
z^{\mu_X(\sigma,\lambda)} \ti{x}, \quad \forall \ti{x} \in
(\Gr(u)|_{\hat{C}_0})^* \Gr(P) \times_G \ti{X} .$$

\begin{definition}  {\rm (Mundet stability)}  
Let $(P,u)$ be a gauged map from a smooth projective curve $C$ to the
quotient stack $X/G$.  The {\em Mundet weight} of a parabolic
reduction $\sigma$ and dominant coweight $\lambda$ is
$$ \mu(\sigma,\lambda) = \mu_{BG}(\sigma,\lambda) +
\mu_X(\sigma,\lambda) \in \Z .$$
The gauged map $(P,u)$ is Mundet {\em semistable} resp. {\em stable}
if and only if
$$ \mu(\sigma,\lambda) \leq 0, \ \text {resp.} \ < 0, \quad \forall
    (\sigma,\lambda) .$$
A pair $(\sigma,\lambda)$ such that $ \mu(\sigma,\lambda ) \ge 0 $ is
a {\em destabilizing pair}.  A pair $(P,u)$ is {\em polystable} iff
\begin{equation} \label{polystable}  \mu(\sigma,\lambda) = 0 \iff \mu(\sigma,-\lambda) = 0, \quad \forall
(\sigma,\lambda) .\end{equation}
That is, a pair $(P,u)$ is polystable if for any destabilizing pair
the opposite pair is also destabilizing. 
\end{definition}

More conceptually the semistability condition above is the
Hilbert-Mumford stability condition adapted to one-parameter subgroups
of the complexified gauge group, as explained in \cite{mund:corr}.
Semistability is independent of the choice of invariant inner product
as follows for example from the presentation of the semistable locus
in Schmitt \cite[Section 2.3]{schmitt:git}.

We introduce notation for various moduli stacks.  Let $\M^G(C,X)$
denote the moduli space of Mundet semistable pairs; in general,
$\M^G(C,X)$ is an Artin stack as follows from the git construction
given in Schmitt \cite{schmitt:univ,schmitt:git} or the more general
construction of hom stacks in Lieblich \cite[2.3.4]{lieblich:rem}.
For any $d \in H_2^G(X,\Z)$, denote by $\M^G(C,X,d)$ the moduli stack
of pairs $v = (P,u)$ with
$$v_* [C] := (\phi \times_G \on{id}_X)_* u_* [C] = d \in
H_2^G(X,\Z) $$
where $\phi:P \to EG$ is the classifying map.  

\subsection{Compactification} 

Schmitt \cite{schmitt:univ} constructs a Grothendieck-style
compactification \label{quots} of the moduli space of
Mundet-semistable obtained as follows.  Suppose $X$ is projectively
embedded in a projectivization of a representation $V$, that is $ X
\subset \P(V)$.  Any section $u: C \to P(X)$ gives rise to a line
sub-bundle
$ L := u^* (\mO_{\P(V)} (-1) \to \P(V))$
of the associated vector bundle $P \times_G V$.  From the inclusion
$\iota:L \to P(V)$ we obtain by dualizing a surjective map
$$ j: P(V^\dual) := P \times_G V^\dual \to L^\dual .$$
A {\em bundle with generalized map} in the sense of Schmitt
\cite{schmitt:git} is a pair $(P,j)$ such that $j$ has base points in
the sense that 
$$\zeta \in C \ \text{basepoint} \ \iff ( (\rank(j_\zeta):
P(V)_\zeta^\dual \to L_\zeta^\dual) = 0) .$$
Schmitt \cite{schmitt:git} shows that the Mundet semistability
condition extends naturally to the moduli stack of bundles with
generalized map.  Furthermore, the compactified moduli space
$\ol{\M}^{\quot,G}(C,X)$ is projective, in particular
proper.

Schmitt's construction of the moduli space of bundles with generalized
maps uses geometric invariant theory.  After twisting by a
sufficiently positive bundle we may assume that $P(V^\dual)$ is
generated by global sections.  A collection of sections $s_1,\ldots,
s_l$ generating $P(V^\dual)$ is called a {\em level structure}.
Equivalently, an \(l\)-level structure is a surjective morphism $ q:
\mO_C^{\oplus l} \to P(V^\dual) .$ Denote by
$\M^{G,\lev}(C,\P(V))$ \label{level} the stack of gauged maps to
$\P(V)$ with \(l\)-level structure.  The group $GL(l)$ acts on the stack of
\(l\)-level structures, with quotient
\begin{equation} \label{schmittgit} \M^{G,\lev}(C,\P(V)) / GL(l) = \M^G(C,\P(V)) .\end{equation}
Denote by $\M^{G,\lev}(C,X) \subset \M^{G,\lev}(C,\P(V))$ the substack
whose sections $u: C \to \P(V)$ have image in $P(X) \subset P(\P(V))$.
Then by restriction we obtain a quotient presentation
$$ \M^{G,\lev}(C,X) / GL(l) = \M^G(C,X) .$$
Allowing the associated quotient $P \times_G V^\dual \to P \times_G
L^\dual$ to develop base points gives a compactified moduli stack of
gauged maps with level structure $\ol{\M}^{G,\quot,\lev}(C,X)$.
Schmitt \cite{schmitt:univ,schmitt:git} shows that the stack
$\ol{\M}^{G,\quot,\lev}(C,X)$ has a canonical linearization and the
git quotient $\ol{\M}^{G,\quot,\lev}(C,X) \qu GL(l)$ defines a
compactification $\ol{\M}^{G,\quot}(C,X)$ of $\M^G(C,X)$ independent
of the choice of $l$ as long as $l$ is sufficiently large.  A version
of the quot-scheme compactification with markings is obtained by
adding tuples of points to the data.  That is,
$$ \ol{\M}^{G,\quot}_n(C,X) := \ol{\M}^{G,\quot}(C,X) \times
\ol{\M}_n(C) $$
where we recall that $\ol{\M}_n(C)$ is the moduli stack of stable maps
$p: \hat{C} \to C$ of class $[C]$ with $n$ markings and genus that of
$C$.  The orbit-equivalence relation can be described more
naturally in terms of {\em $S$-equivalence}: Given a family
$(P_S,u_S)$ of semistable gauged maps over a scheme $S$, such that the
generic fiber is isomorphic to some fixed $(P,u)$, then we declare
$(P,u)$ to be $S$-equivalent to $(P_s,u_s)$ for any $s \in S$.  Any
equivalence class of semistable gauged maps has a unique
representative that is polystable, by the git construction in Schmitt
\cite[Remark 2.3.5.18]{schmitt:univ}.  From the construction
evaluation at the markings defines maps to the quotient stack
$$ \ol{\M}_n^{G,\quot}(C,X,d) \to (V/\C^\times)^n, \quad ((p \circ
z_i)^*L, j \circ p \circ z_i) $$
rather than to the git quotient $X^n \subset \P(V)^n$.\footnote{The
  Ciocan-Fontanine-Kim-Maulik \cite{cf:st} moduli space of {\em stable
    quotients} remedies this defect by imposing a stability condition
  at the marked points $z_1,\ldots, z_n \in C$.  The moduli stack then
  admits a morphism to $\ol{\cI}_{X \qu G}^n$ by evaluation at the
  markings.}

\begin{example}\label{toric example} {\rm (Mundet semistable maps in the toric case)}
  If $G$ is a torus and $X = \P(V)$ then we can give an explicit
  description of Schmitt's quot-scheme compactification
  $\ol{\M}^{G,\quot}(C,X,d) $ of Mundet semistable maps
  \cite{schmitt:univ}.

Specifically let $X = \P(V)$ where $V$ is a $k$-dimensional vector space and
\begin{equation}  \label{decompose}  V = \bigoplus_{i=1}^k V_i \end{equation} 
is the decomposition of $V$ into weight spaces $V_i$ with weight
$\mu_i \in \g_\Z^\dual$.  

A point of $\M^{G}(C,X,d)$ is a pair $(P,u)$:
\[
P\to C \ \ \ \ \ \  u\colon C \to P(X),
\]
where $P$ is a $G$-bundle and $u$ is a section. We consider $u$ as a morphism $\widetilde{u} \colon L\to P(V)$ with $L\to C$ a line bundle \cite[Theorem 7.1]{har}. Via the decomposition of $V$, we can write $\widetilde{u}$ as a $k$-tuple:
\[
(\widetilde{u}_1, \dotsc, \widetilde{u}_k) \in \bigoplus_{i=1}^k H^0(P(V_i)\otimes L^\dual).
\]
The compactification $\ol{\M}^{G,\quot}(C,X,d)$ is obtained by allowing the $\widetilde{u}_i$ to have simultaneous zeros:
\[
\widetilde{u}_1^{-1}(0) \cap \dotsb \cap
\widetilde{u}_k^{-1}(0) \neq \emptyset.
\]
We make use of this example later on so we collect a few results about $\ol{\M}^{G,\quot}(C,X,d)$ below.
\end{example}
Recall \eqref{eqiv c_1} there is a projection $H^2_G(X)\to H^2(B G)=\g_\Z^\dual $ and similarly we have $H_2^G(X)\to H_2(B G) = \g_\Z$.  Associated to $v=(P,u)$ is the discrete data:
\begin{itemize}
\item[--] $v_*[C]=d \in H_2^G(X,\Z)$ and its image $d(P) \in H_2(BG)$
\item[--] $c_1^G(\widetilde{X}) \in H^2_G(X)$ and its image $\theta \in H^2(B G)$
\item[--] $d(u):= -c_1(L) \in H^2(C,\Z) \cong \Z$.
\end{itemize}
Note that $d(P)$ is the degree of $P$; that is, $d(P) =
c_1(P) \in \g_\Z$ under the identification
$H^2(C,\g_\Z)\cong \g_\Z$. We can now state the 
following:

\begin{lemma}\label{torus action on  P(V)} Let $G$ be a
  torus acting on a vector space $V$. Let $V = \bigoplus_{i=1}^kV_i$
  be its decomposition into weight spaces with weights $\mu_1, \dotsc,
  \mu_k$.  The Mundet semistable locus consists of pairs $(P,u)$ such
  that
\begin{equation} \label{oneeq} \on{hull} ( \{
		- d(P)^\dual + \mu_i | \ti{u}_i \neq 0 \}) \ni
		\theta. \end{equation} 
Furthermore let $W = \bigoplus_{i=1}^k H^0(P(V_i)\otimes L^\dual)$ and
let $W^{ss}$
consist of $(\widetilde{u}_1, \dotsc, \widetilde{u}_k)$ such that
\eqref{oneeq} holds. Then
 $ \ol{\M}^{G,\quot}(C,X,d) \cong W^{ss}/G. $
%
\end{lemma}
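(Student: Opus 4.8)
The statement has two parts. First, the characterization of the Mundet semistable locus via the hull condition \eqref{oneeq}; second, the identification $\ol{\M}^{G,\quot}(C,X,d) \cong W^{\ss}/G$. My plan is to reduce the Mundet semistability computation to the Hilbert--Mumford criterion already established in Lemma for the torus action on $\P(V)$, keeping careful track of how the bundle degree $d(P)$ shifts the relevant weights. The key point is that for a torus $G$ the only parabolic subgroup is $G$ itself, so a parabolic reduction $\sigma$ carries no information and the Mundet weight $\mu(\sigma,\lambda) = \mu_{BG}(\sigma,\lambda) + \mu_X(\sigma,\lambda)$ is governed entirely by a choice of coweight $\lambda \in \g_\Z$ together with the degree data of the bundle.

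\textbf{Step 1: the Ramanathan term.} For $G$ a torus, I would compute $\mu_{BG}(\sigma,\lambda)$ directly. Since every character $\chi_\lambda$ factors trivially through the (empty) unipotent part, the associated line bundle $P(\C_{\lambda^\dual})$ has degree equal to the pairing of $\lambda^\dual$ with $c_1(P) = d(P) \in \g_\Z$. Thus $\mu_{BG}(\sigma,\lambda) = (d(P)^\dual, \lambda)$ up to sign conventions fixed by the inner product identification $\g_\Q \to \g_\Q^\dual$. This is the term that produces the shift $-d(P)^\dual$ in \eqref{oneeq}.

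\textbf{Step 2: the Hilbert--Mumford term.} Next I would identify $\mu_X(\sigma,\lambda)$ with the fiberwise Hilbert--Mumford weight computed in Lemma. Writing $\ti{u} = (\ti{u}_1,\dots,\ti{u}_k)$ in the weight decomposition $V = \bigoplus_i V_i$, the section lands in $\P(V)$ and its value at the generic point selects exactly the weights $\mu_i$ with $\ti{u}_i \neq 0$. Applying the formula \eqref{hm} from Lemma, $\mu_X(\sigma,\lambda) = \min_i\{-\mu_i(\lambda) : \ti{u}_i \neq 0\} + (\theta,\lambda)$ on the root component. Combining with Step 1, the total Mundet weight is $\mu(\sigma,\lambda) = \min_i\{(d(P)^\dual - \mu_i)(\lambda) : \ti{u}_i \neq 0\} + (\theta,\lambda)$, and the semistability condition $\mu(\sigma,\lambda) \le 0$ for all $\lambda$ is, by the same convex-geometry argument as in Lemma (an affine functional is nonpositive on $\theta$ for all $\lambda$ iff $\theta$ lies in the relevant hull), precisely the condition \eqref{oneeq}.

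\textbf{Step 3: the GIT identification.} For the isomorphism $\ol{\M}^{G,\quot}(C,X,d) \cong W^{\ss}/G$, I would invoke Schmitt's construction \eqref{schmittgit} together with Example \ref{toric example}: a point of $\M^G(C,X,d)$ is a $G$-bundle $P$ together with a section, equivalently the tuple $(\ti{u}_1,\dots,\ti{u}_k) \in W = \bigoplus_i H^0(P(V_i)\otimes L^\dual)$, and the quot-scheme compactification is obtained by allowing simultaneous zeros, i.e. passing to all of $W$ rather than the locus where the $\ti{u}_i$ have no common zero. Since the bundle degree $d$ and hence the underlying $P$ (up to the torus action) is fixed by the discrete data, the residual moduli is the space of sections $W$ modulo the automorphisms of $P$, which for a fixed topological type is the torus $G$ acting by scaling; restricting to the Mundet semistable locus gives $W^{\ss}$, and the quotient presentation follows.

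\textbf{Main obstacle.} The routine parts are the two weight computations; the delicate point I expect to require care is \emph{matching conventions and the role of the bundle degree}. In Lemma the weights $\mu_i$ are fixed data of the linear action, whereas here the effective weights are $-d(P)^\dual + \mu_i$, with the shift coming from the twist $L^\dual$ and the nontrivial topology of $P$. I would need to verify that the Hilbert--Mumford weight is genuinely computed on the root component $\hat{C}_0$ of the associated graded (as defined in \eqref{assocgrad}) and that the generic value of the section there selects exactly $\{\mu_i : \ti{u}_i \neq 0\}$, independent of the special fibers; this is where the degree-$d(P)$ bookkeeping must be done honestly rather than absorbed into conventions. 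A secondary subtlety is confirming that $S$-equivalence classes in $\ol{\M}^{G,\quot}(C,X,d)$ correspond exactly to $G$-orbits in $W^{\ss}$, so that the stacky quotient $W^{\ss}/G$ really is the compactification and not merely its coarse space.
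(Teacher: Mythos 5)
Your proposal is correct and takes essentially the same approach as the paper: since $G$ is abelian every parabolic is $G$ itself and $\Gr(P)=P$, so the Mundet weight is the sum of the Ramanathan term $(d(P),\lambda)$ and the fiberwise Hilbert--Mumford weight $\min_i\{-\mu_i(\lambda):\ti{u}_i\neq 0\}+\theta(\lambda)$ from the earlier projective-space lemma, and the hull characterization follows by the same convexity argument, while the paper dismisses the quot-scheme identification as ``an immediate consequence'' of Schmitt's construction, exactly as your Step 3 elaborates. Your Step 3 is in fact more detailed than the paper's, though note that your claim that $P$ is fixed by the discrete data silently assumes bundles of fixed degree carry no moduli (true in the genus-zero setting of the subsequent example), a point the paper itself also elides.
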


\begin{proof}
Since $G$ is abelian, $\Gr(P) = P$ for any
  pair $(\lambda,\sigma)$.  It follows that for any $\lambda \in
  \g_\Q$, the Mundet weight is
$$ \mu(\sigma,\lambda) := \min_i \{ (d(P),\lambda) -
  \mu_i(\lambda) + \theta(\lambda), \ti{u}_i \neq 0\} .$$
Hence the semistable locus is the space of pairs $(P,u)$
where
$$ \on{hull} ( \{ - d(P)^\dual + \mu_i | \ti{u}_i \neq 0 \})
\ni \theta .$$
This proves the first claim.
The second claim is an immediate consequence.  


\end{proof}

\begin{example}
Consider $G = \C^\times $ and $V =
\C^k$. Then
$$\deg(  P(V_i)\otimes L^\dual) =  \deg(P(V_i)) - \deg(L) = d(P) + d(u),
\quad i = 1,\ldots, k .$$
It follows that the moduli stack admits an isomorphism
$$ \ol{\M}^{G,\quot}(C,X,d) \cong \C^{k(d(P) + d(u) + 1),\times} /
\C^\times \cong \P^{k(d(P) + d(u) + 1) - 1}.$$
This moduli stack is substantially simpler in topology than
the moduli space of stable maps to $C \times X\qu G$, despite the dramatically more complicated stability condition. This ends the example.
\end{example}

A {\em Kontsevich-style compactification} of the stack of
Mundet-semistable gauged maps which admits evaluation maps as well as
a Behrend-Fantechi virtual fundamental class \cite{cross} is defined
as follows.  The objects in this compactification allow {\em stable
  sections}, that is, stable maps $u : \hat{C} \to P(X) $ whose
composition with $P(X) \to C$ has class $[C]$.  Thus objects of
$\ol{\M}^G_n(C,X)$ \label{mss} are triples $(P, \hat{C}, u,\ul{z})$
consisting of a $G$-bundle $P \to C$, a projective nodal curve
$(\hat{C},\ul{z})$, and a stable map $u: \hat{C} \to P \times_G X$
whose class projects to $[C] \in H_2(C,\Z)$.  Morphisms are the
obvious diagrams.  To see that this category forms an Artin stack,
note that the moduli stack of bundles $\Hom(C,BG)$ has a universal
bundle
$$U \to C \times \Hom(C,BG) .$$ 
Consider the associated $X$-bundle
$$U \times_G X \to C \times \Hom(C,BG)  .$$  
The stack $\ol{\M}_n^G(C,X)$ is a substack of the stack of stable maps to $U
\times_G X$, and is an Artin stack by e.g. Lieblich
\cite[2.3.4]{lieblich:rem}, see \cite{qk2} for more details.  Note
that hom-stacks are not in general algebraic \cite{bhatt}.

Properness of the Kontsevich-style compactification follows from a
combination of Schmitt's construction and the Givental map.  A proper
relative Givental map is described in Popa-Roth \cite{po:stable}, and
in this case gives a morphism
\begin{equation} \label{givmor} 
\ol{\M}^G(C,X,d) \to\ol{\M}^{G,\quot}(C,X,d).\end{equation}
For each fixed bundle this map collapses bubbles of the section $u$
and replaces them with base points with multiplicity given by the
degree of the bubble tree.  Since the Givental morphism
\eqref{givmor}, the quot-scheme compactification
$\ol{\M}_n^{G,\quot}(C,X,d)$ and the forgetful morphism
\(\ol{\M}^G_n(C,X,d) \to\ol{\M}^{G}(C,X,d)\)
are proper, $\ol{\M}^G_n(C,X,d)$ is
proper.


\section{Applications}

\subsection{Presentations of quantum cohomology and quantum K-theory}

The first group of applications use that the linearized quantum Kirwan
map is a ring homomorphism.  In good cases, such as the toric case,
one can prove that the linearized quantum Kirwan map is a surjection
$$ D_\alpha \kappa_X^G : T_\alpha QH_G(X) \to T_{\kappa_X^G(\alpha)}
QH(X \qu G) $$
and so obtain a presentation for the quantum product in quantum
cohomology \cite{gw:surject} or quantum K-theory at 
$\kappa_X^G(\alpha)$.  A simple
example is projective space itself, which is the git quotient of a
vector space by scalar multiplication of $X = \C^k$ by $G =
\C^\times$:
$$ \P^{k-1} = \C^k  \qu \C^\times $$
We show how to derive the relation in quantum cohomology or quantum
K-theory.  The moduli space of affine gauged maps of class $d \in
H_2^G(X) \cong \Z$ is the space of $k$-tuples of degree $d$
polynomials
$$ u(z) = (u_1(z),\ldots, u_k(z)) $$
with non-zero leading order term given by the top derivative
$u^{(d)}$.  Thus its evaluation at infinity lies on the
semistable locus.
$$ \M^G_{1,1}(\bA, X) \cong (\C^{k(d+1)} - (\C^{kd} \times \{ 0 \} ))/
\C^\times .$$
Let $\alpha = 0$ and let $\beta \in H_2^G(X)$ be the Euler class of
the trivial vector bundle $X \times \C \to X$ where
\(G\) acts on \(\C\) with weight one.  The pull-back 
$$\ev_1^*(
(X \times \C \to X)^{\oplus d}) \to \ol{\M}^G_{1,1}(\bA,X,d) $$
has a canonical section given by the first $d$ derivatives,
$$\sigma: \ol{\M}^G_{1,1}(\bA,X,d)\to \ev_1^* ((X \times \C \to X)^{\oplus d}), \quad [u] \mapsto
[u^{(i)}(z_1)]_{i < d} . $$
One can check that this section extends to a section over the
compactification $\ol{\M}^G_{1,1}(\bA,X,d)$ which is non-vanishing on the
boundary.  The map $\ev_{\infty,d}: \sigma^{-1}(0) \to X \qu G$ is an
isomorphism, since the evaluation takes the terms of top order.  This
implies
$$ (\ev_{\infty,d,*} \ev_1^*) \beta^{dk} = \ev_{\infty,d,*}
|_{\sigma^{-1}(0)} 1 = 1 .$$
In particular for $ d= 1$ the image of $\beta^k$ under the linearized
quantum Kirwan map is
$$ D_0 \kappa_X^G(\beta^{k}) = q .$$
In quantum cohomology one obtains
$$ QH(\P^{k-1}) = \Q[\beta,q]/ (\beta^k - q). $$
This was one of the first computations in quantum cohomology but the
advantage of this approach is that it works equally well in quantum
K-theory.  
In this case $\beta$ is the exterior algebra on the dual of $L$,
$\beta = 1 - L^{-1} \in K(\P^{k-1})$, where $L$ is the class of the
hyperplane line bundle.  One obtains
$$ QK(\P^{k-1}) = \Q[L,L^{-1},q]/ ( (1 - L^{-1})^k - q) .$$
More generally, one gets Batyrev-type presentations of the quantum
cohomology and quantum K-theory of toric smooth Deligne-Mumford stacks
with projective coarse moduli spaces.

\subsection{Solutions to quantum differential equations}

A second group of applications concerns explicit formulas for
solutions to quantum differential equations.  Recall that a
fundamental solution in quantum cohomology is given by one-point
descendent potentials (J-function)
$$ \tau_{X \qu G}^\pm: QH(X \qu G) \to QH(X \qu
G)[\zeta,\zeta]^{-1}]] $$
which appear when one factorizes the genus zero graph
potential $\tau_{X \qu G}$ for $C = \P^1$ into
contributions from $\C^\times$-fixed points at $0,\infty$;
there is a parallel discussion in quantum K-theory.  One has a
similar factorization of the gauged potential into contributions from
$0, \infty$ in terms of {\em localized gauged potentials}
$$ \tau_{X,G}^\pm: QH_G(X) \to QH(X \qu G)[[\zeta]] .$$
Using the fact that the cobordism given by scaled gauged maps is
equivariant, one obtains the following ``localized'' version of the
adiabatic limit theorem:
$$ \tau_{X,G}^{\pm} = \tau_{X \qu G}^\pm \circ \kappa_X^G
.$$
Often, the localized gauged potentials are easier to compute than the
localized graph potentials.  For example, for the case of a vector
space $X$ with a representation of a torus $G$ one obtains at $\alpha
= 0 $
$$ \tau_{X,G}^{\pm}(0)  = \sum_{d \in H_2^G(X)} q^d \frac{
  \prod_{j=1}^k \prod_{m = -\infty}^{0} (1 - X_j^{-1} \zeta^m ) }{
  \prod_{j=1}^k \prod_{m=-\infty}^{\mu_j(d)} (1 - X_j^{-1} \zeta^m )
}$$
where $X_j$ is the class of the line bundle given by the $j$-th weight
space of $X$.  In this case the relationship between the two
potentials has essentially been established in a series of papers by
Givental \cite{gi:pe1}, \cite{gi:pe2}, \cite{gi:pe3}, \cite{gi:pe4},
\cite{gi:pe5}, \cite{gi:pe6}.  This gives yet another way of producing
relations in the quantum cohomology or quantum K-theory.

For actions of non-abelian groups, one may also obtain formulas for
fundamental solutions to the quantum differential equation by an {\em
  abelianization} procedure suggested by Martin \cite{mar:sy} in the
classical case and by Bertram-Ciocan-Fontanine-Kim \cite{be:qu} in the
quantum case.  In the case of gauged maps, a proof in K-theory can be
reduced to Martin's case by taking the {\em small linearization}
chamber in which the bundle part of a gauged genus zero map is
automatically trivial.  As a result, one also gets formulas for
fundamental solutions to quantum differential equations for such
spaces as Grassmannians and moduli of framed rank $r$
sheaves, charge \(k\) (in the
Atiyah-Hitchin-Drinfeld-Manin description).  For example for framed
sheaves one obtains the formula for the fundamental solution (with
equivariant parameters $\xi_1,\xi_2$ corresponding to the action of
the torus on the plane):
\begin{multline} \tau_{X \qu G,-}(0) = \sum_{d \ge 1} q^d
  \sum_{\ul{d}: d_1 + \ldots + d_k = d} \prod_{i \neq j}
  \frac{\Delta_{\ul{d}}(\theta_i - \theta_j, \xi_1 + \xi_2)
    \Delta_{\ul{d}}(\theta_i -\theta_j, 0 )} {\Delta_{\ul{d}}(\theta_i
    - \theta_j,\xi_1) \Delta_{\ul{d}}(\theta_i - \theta_j,
  \xi_2)} \\
  \prod_{i =1 }^k \Delta_{\ul{d}}(\theta_i,0)^{-r}
  \Delta_{\ul{d}}(-\theta_i, \xi_1 + \xi_2)^{-r} .\end{multline}
where $ \Delta_{\ul{d}}(\theta, w) := ( \prod_{l = -\infty}^{\theta \cdot
  \ul{d}} (\theta + w+ l \zeta))/( \prod_{l = -\infty}^{0}
  (\theta + w+l
\zeta)) $ and $\theta_1,\ldots,\theta_k$ are the Chern roots of the
tautological bundle, \(\theta=\sum k_i \theta_i\) any
integral linear combination and \(\theta\cdot \ul{d}=\sum
d_i k_i\).  A version of this formula for $r = 1$ (with a
``mirror map'' correction) is proved by Ciocan-Fontanine-Kim-Maulik.
The $r > 1$ formula seems to be new.  Furthermore, we claim
that same approach works in quantum K-theory.  For the Grassmannian, one obtains
a formula for the descendent potential in both quantum cohomology and
quantum K-theory.  From this one may deduce the standard presentation
of the quantum cohomology or quantum K-theory of the Grassmannian as a
quotient of the ring of symmetric functions as in Gorbounov-Korff
\cite{gorb:int} and Buch-Chaput-Mihalcea-Perrin \cite{buch:kchev}.

\subsection{Wall-crossing for Gromov-Witten invariants}

A final application is a conceptual result on the relationship between
Gromov-Witten invariants of symplectic quotients.  For simplicity let
$G = \C^\times$ and let $X$ be a smooth projective $G$-variety
equipped with a family of linearizations $\ti{X}_t$.  Associated to
the family of linearizations we have a family of git quotients $ X
\qu_t G$; we suppose that stable $\neq$ semistable at $t = 0$ in which
case $X \qu_t G$ undergoes a flip.  For example, if $X = \P^4$ with
weights $0,1,1,-1,-1$ then $X \qu_t G$ undergoes an Atiyah flop as $t$
passes through zero.  Using $\C^\times$-equivariant versions of the
gauged Gromov-Witten invariants we obtain an explicit formula for the
difference
\begin{equation} \label{diff}
 \tau_{X \qu_+ G} \kappa_{X,+}^G -
  \tau_{X \qu_- G} \kappa_{X,-}^G \end{equation}
in either cohomology or K-theory.  We say that the variation is {\em
  crepant} if 
the weights at the $G$-fixed points at $t = 0$ sum to zero; in this
case the birational equivalence between $X \qu_- G$ and $X \qu_+ G$ is
a combination of flops.  In this case one can show that the difference
in \eqref{diff} is, as a distribution in $q$, vanishing almost
everywhere:
$$ 
\tau_{X \qu_+ G} \kappa_{X,+}^G =_{\text{a.e. in q}} \tau_{X \qu_- G}
\kappa_{X,-}^G $$
in both quantum cohomology and K-theory as well. In the
later this holds  after shifting by the square
root of the canonical line bundle.  This is a version of the
so called {\em crepant transformation} conjecture in \cite{cr:crep}, with the added benefit
that the proof is essentially the same for both cohomology (covered in
\cite{wall}) and K-theory.

\def\cprime{$'$} \def\cprime{$'$} \def\cprime{$'$} \def\cprime{$'$}
\def\cprime{$'$} \def\cprime{$'$}
\def\polhk#1{\setbox0=\hbox{#1}{\ooalign{\hidewidth
      \lower1.5ex\hbox{`}\hidewidth\crcr\unhbox0}}} \def\cprime{$'$}
\def\cprime{$'$}

\end{document}